\newtheorem{theorem}{Theorem}[section]
\newtheorem{corollary}[theorem]{Corollary}
\newtheorem{lemma}[theorem]{Lemma}
\newtheorem{proposition}[theorem]{Proposition}
\theoremstyle{definition}
\newtheorem{definition}[theorem]{Definition}
\newtheorem{example}[theorem]{Example}
\newtheorem{remark}[theorem]{Remark}
\numberwithin{equation}{section}
\title[Optimality conditions for a class of MPEC problems]
{On some optimality conditions for a class of problems in mathematical
programming with equilibrium constraints}
\author[A. Uderzo]{Amos Uderzo}
\address[A. Uderzo]{Dept. of Mathematics and Applications, University
of Milano - Bicocca, Milano, Italy}
\email{{\tt amos.uderzo@unimib.it}}
\keywords{MPEC, vector equilibrium, optimality conditions, penalization,
subtransversality, error bounds, Mordukhovich normal and subdifferential calculus}
\subjclass[2010]{49J53, 49J52, 90C33}
\date{\today}
\newcommand{\R}{\mathbb R}
\newcommand{\N}{\mathbb N}
\newcommand{\X}{\mathbb X}
\newcommand{\Y}{\mathbb Y}
\newcommand{\Uball}{{\mathbb B}}
\newcommand{\Usfer}{{\mathbb S}}
\newcommand{\SAss}{\mathcal{A}}
\newcommand{\tepsilon}{\tilde{\epsilon}}
\newcommand{\dom}{{\rm dom}\, }
\newcommand{\graph}{{\rm gph}\,}
\newcommand{\epi}{{\rm epi}\,}
\newcommand{\nullv}{\mathbf{0}}
\newcommand{\clco}{{\rm clco}\, }
\newcommand{\cone}{{\rm cone}\, }
\newcommand{\bd}{{\rm bd}\, }
\newcommand{\inte}{{\rm int}\, }
\newcommand{\Argmin}{{\rm Argmin}}
\newcommand{\Equi}{{\mathcal S}{\mathcal E}}
\newcommand{\Kmap}{{\mathcal K}}
\newcommand{\MPEC}{{\rm MPEC}}
\newcommand{\VEP}{{\rm VEP}}
\newcommand{\mf}{\varsigma}
\newcommand{\Uplim}{{\rm Limsup}}
\newcommand{\parord}{\le_{{}_C}}
\newcommand{\Fsubd}{\widehat{\partial}}
\newcommand{\partialx}{\partial_x}
\newcommand{\stsl}[1]{|\nabla #1|}
\newcommand{\dcone}[1]{{#1}^{{}^\ominus}}
\newcommand{\ball}[2]{{\rm B}\left(#1;#2\right)}
\newcommand{\dist}[2]{{\rm dist}\left(#1;#2\right)}
\newcommand{\exc}[2]{{\rm exc}(#1;#2)}
\newcommand{\Ncone}[2]{{\rm N}(#1;#2)}
\newcommand{\Gder}[2]{{\rm D}#1(#2)}
\newcommand{\Proj}[2]{\Pi(#1;#2)}
\newcommand{\Ucone}[2]{{\rm N}^\flat(#1;#2)}
\newcommand{\Coder}[3]{{\rm D}^*#1(#2,#3)}
\begin{document}

\begin{abstract}
This paper considers mathematical programs, whose constraints are
expressed by a parameterized vector equilibrium problem. The latter
is a well recognized framework, which is able to cover multicriteria
optimization, vector variational inequalities and complementarity problems.
As the solutions to vector equilibrium problems are here intended in a
strong sense, the consequent MPEC problems result in a class still
little explored by the existing literature.
Some necessary optimality conditions for such programs are established
following a penalization approach. To derive and express these
conditions, concepts and tools of nonsmooth analysis are employed.
In treating equilibrium constraints, by techniques of variational
analysis some error bounds are obtained, which may be of independent
interest.
\end{abstract}

\maketitle




\section{Introduction and problem statement}

A mathematical program with equilibrium constraints, usually referred to as
MPEC, is an optimization problem essentially different from standard problems
of nonlinear programming with traditional equality and inequality constraints.
In MPEC, the objective function depends on two kinds of variables,
having different roles. Indeed, there is one kind of variables (the so-called
``state" or ``follower" variables) whose feasibility is determined by
the other kind (the ``control" or ``leader" variables).
More specifically, the feasible region of a MPEC involves equilibrium/optimality
conditions, expressed by variational inequalities or by other types of conditions,
for a lower-level problem of parametric optimization. As a matter of fact,
a MPEC can be regarded as a generalization of a bilevel program, in which the follower's decision
problem is modeled as a lower-level equilibrium problem. An interested
reader will find a flourishing literature devoted to MPECs (see, among the others,
\cite{Demp03,GfrYe20,LuPaRa96,KocOut04,LuPaRaWu96,Mord06b,OuKoZo98,SchSto99,Ye05}).
Reference sources for applications of MPECs in various contexts (especially
in engineering and economics) are, for instance, \cite[Section 7]{Demp03}
and \cite[Section 5]{KocOut04}.

The present paper deals with a specific class of MPECs, in which the equilibrium
constraint requires to consider the strong solutions to a vector equilibrium
problem parameterized by the leader variable.
The latter problem is a variant generalizing the equilibrium problem
as introduced by Blum and Oettli (\cite{BluOet94}) to the case of bifunctions
taking values in partially ordered vector spaces.
It has been proposed to subsume in a suitable framework such problems
as multicriteria optimization, vector variational inequalities and vector
complementarity problems (see \cite{Ansa00,BiHaSc97,DanHad96,Gong06}).
A typical phenomenon arising in such kind of issue is the lack
of a solution notion naturally emerging a priori. Instead, several solution notions
are often singled out on the base of the set of conditions that one
may require to be satisfied, ranging from the weak equilibrium concept
to the strong one, with various intermediate cases.
In the present paper, the investigations will focus exclusively on
strong solutions to a parameterized vector equilibrium problem.

In mathematical terms, the class of MPECs considered in the present paper
can be formalized as follows
$$
  \min\varphi(\xi,x) \quad \hbox{ subject to } \quad x\in\Equi(\xi),\
  \xi\in\Omega,
  \leqno (\MPEC)
$$
where the objective function $\varphi:\R^p\times\R^n\longrightarrow\R$ and the geometric
constraint $\Omega\subseteq\R^p$ are given problem data, $\Omega$ being closed,
while $\Equi:\R^p\rightrightarrows\R^n$ denotes the solution mapping
to the lower-level parametric equilibrium problem, namely
$$
  \hbox{ find $x\in\Kmap(\xi)$ such that } f(\xi,x,z)\in C,
  \quad\forall z\in\Kmap(\xi).  \leqno  (\VEP(\xi))
$$
The latter problem is defined by a mapping (parametric vector-valued bifunction)
$f:\R^p\times\R^n\times\R^n\longrightarrow\R^m$, with $\R^m$
being partially ordered by a (nontrivial) closed, convex
and pointed cone $C\subset\R^m$, and by the set-valued mapping
$\Kmap:\R^p\rightrightarrows\R^n$ modeling the feasible region of the
parameterized vector equilibrium problems. In other words, it is
$$
  \Equi(\xi)=\{x\in\R^n:\ x \hbox{ solves } (\VEP(\xi))\}.
$$
It is clear that in $(\MPEC)$ $x$ plays the role of follower variable,
whereas $\xi$ plays the role of leader variable.
As one immediately realizes, in general the characteristic constraint $x\in\Equi(\xi)$
defines only implicitly the set-valued mapping $\Equi$. In fact, in most
cases such a set-valued mapping can be hardly determined explicitly and
its graph may exhibit a bizarre behaviour that makes the feasible region
of a $(\MPEC)$ fairly complicated. This is the ``most distinctive"
feature of MPECs.
When by proper specialization of $f$, $(\VEP(\xi))$ becomes a vector variational
inequality or a vector complementarity problem, the corresponding
$(\MPEC)$, if reduced to a single-level standard program, is known to violate
most of the standard constraint qualifications.

\begin{remark}
The more general geometric constraint $(\xi,x)\in\Omega\subseteq
\R^p\times\R^n$ that one could consider for needs in a specific model design
can be easily subsumed in the format $(\MPEC)$, by introducing
the set-valued mapping $\mathcal{O}:\R^p\rightrightarrows\R^n$
such that $\graph\mathcal{O}=\Omega$, i.e. $\mathcal{O}(\xi)=
\{x\in\R^n:\ (\xi,x)\in\Omega\}$, and by replacing $\Kmap$ with
$\widetilde{\Kmap}:\R^p\rightrightarrows\R^n$, defined as being
$\widetilde{\Kmap}(\xi)=\Kmap(\xi)\cap\mathcal{O}(\xi)$.
\end{remark}

The main goal of the investigations exposed in the paper is to
derive necessary optimality conditions for $(\MPEC)$. This
task is undertaken by a penalization approach, implemented
by means of error bounds for a family of $(\VEP(\xi))$.
The reader should notice that, even though the statement
of $(\MPEC)$ seems to be the same as that considered in
\cite{KocOut04,LuPaRaWu96,Mord06b,Mord06b}, the generalized
equation $(\VEP(\xi))$ modeling the constraint system is
substantially different from that addressed in the aforementioned
works. In fact, strong vector equilibrium problems are
a specialization of the set-valued inclusion problem,
whose peculiar solution behaviour has been explored in
\cite{Uder19}.

The organization of the rest of this paper is as follows.
Section \ref{Sect:2} is devoted to set up the apparatus
of technical tools needed to implement the penalization approach
to optimality conditions here followed. These tools are mainly
borrowed from set-valued analysis and generalized differentiation
theory.
In Section \ref{Sect:3} the main results of the paper are established,
which are stationary conditions for $(\MPEC)$ expressed in terms of
widely employed nonsmooth analysis constructions, namely
Mordukhovich normals, subdifferential and coderivatives. On the way
to the optimality conditions, error bounds for the solution set to
parameterized vector equilibrium problems are obtained and possible
connections with solution stability for vector equilibrium problems
are outlined, which appear to be new.

\vskip.5cm


\section{Notation and preliminaries}    \label{Sect:2}

The notation employed in this paper is standard.
The acronyms l.s.c.,  u.s.c. and p.h. stand for lower semicontinuous,
upper semicontinuous and positively homogeneous, respectively.
Given a finite-dimensional Euclidean space $\X$, its inner product
is marked by $\langle\cdot,\cdot\rangle$, while $\nullv$ stands for its
null vector.
The closed  ball centered at an element $x\in\X$, with radius
$r\ge 0$, is denoted by $\ball{x}{r}$. In particular,
$\Uball=\ball{\nullv}{1}$, whereas $\Usfer$ stands for the unit sphere.
The distance of a point $x$ from a set $S$ is denoted by $\dist{x}{S}$, with
the convention that $\dist{x}{\varnothing}=+\infty$.
Given a subset $S\subseteq\X$, $\inte S$ denotes its interior,
$\bd S$ its boundary, whereas $\cone S$ its conical hull and
$\clco S$ its convex closure.
Moreover, given any $\epsilon\ge 0$, $\ball{S}{\epsilon}=\{x\in\X:\
\dist{x}{S}\le\epsilon\}$ indicates the (closed) $\epsilon$-enlargement of $S$.
Given two subsets $A$ and $B$ in the same space, $\exc{A}{B}=\sup_{a\in A}
\dist{a}{B}$ stands for the excess of $A$ over $B$.
Whenever $C\subseteq\X$ is a cone, by $\dcone{C}
=\{x\in\X:\ \langle c,x\rangle\le 0,\quad\forall c\in C\}$
its negative dual cone is denoted.
Given a function $\varphi:\X\longrightarrow\R\cup\{\pm\infty\}$,
by $[\varphi\le 0]=\varphi^{-1}([-\infty,0])$ its $0$-sublevel set is
denoted, whereas $[\varphi>0]=\varphi^{-1}((0,+\infty])$ denotes
the strict $0$-superlevel set of $\varphi$.
The symbol $\dom\varphi=\varphi^{-1}(\R)$ indicates the domain of
the function $\varphi$, while $\partial\varphi(x)$ stands for the subdifferential
of $\varphi$ at $x$ in the sense of convex analysis (a.k.a. Fenchel
subdifferential), with the convention $\partial\varphi(x)=\varnothing$
if $x\not\in\dom\varphi$. The normal cone to a set $S$ at $x$ in the
sense of convex analysis is denoted by $\Ncone{x}{S}$.
Given a mapping $g:\X\longrightarrow\Y$, between Euclidean spaces,
its (Fr\'echet) derivative at $\bar x\in\X$ is indicated by
$\Gder{g}{\bar x}$. The adjoint map to $\Gder{g}{\bar x}$ is marked by
$\Gder{g}{\bar x}^*$.
Given a set-valued mapping $F:\X\rightrightarrows\Y$,
$\dom F=\{x\in\X:\ F(x)\ne\varnothing\}$ and $\graph F=\{(x,y)\in
\X\times\Y:\ y\in\ F(x)\}$ denote the domain and the graph of $F$,
respectively. Further notations will be introduced in the sequel,
contextually to their use.

Throughout the paper, the following standing assumptions concerning
the constraining map $\Kmap$ will be maintained:
$$
  \graph\Kmap \hbox{ is closed and }\dom\Kmap=\R^p.
  \leqno (\SAss)
$$
Clearly, one among the implications of $(\SAss)$ is that $\Kmap$ takes
always (nonempty) closed values.

According to the proposed approach, let us introduce the following
auxiliary functions:
\begin{equation}     \label{eq:defni}
  \nu(\xi,x)=\sup_{z\in\Kmap(\xi)}\dist{f(\xi,x,z)}{C}
  =\exc{f(\xi,x,\Kmap(\xi))}{C},
\end{equation}
\begin{equation}    \label{eq:defmi}
  \mu(\xi,x)=\dist{x}{\Kmap(\xi)},
\end{equation}
and
\begin{equation}     \label{eq:defmf}
  \mf(\xi,x)=\nu(\xi,x)+\mu(\xi,x).
\end{equation}
From their very definition the above functions are expected to be
nonsmooth, in general. Yet,
by means of $\mf$ it is possible to provide a convenient functional
characterization of the graph and the values of $\Equi$.

\begin{remark}
By using the definitions in $(\ref{eq:defni})$, $(\ref{eq:defmi})$ and
$(\ref{eq:defmf})$ along with the fact that $C$ and $\Kmap(\xi)$,
for every $\xi\in\R^p$, are closed sets, it is readily seen that
$$
  \Equi(\xi)=\mf^{-1}(\xi,\cdot)(0)=[\mf(\xi,\cdot)\le 0]
  =[\nu(\xi,\cdot)\le 0]\cap[\mu(\xi,\cdot)\le 0],
  \quad\forall\xi\in\R^p
$$
and
\begin{equation}     \label{eq:grphEquichar}
\graph\Equi=[\mf\le 0].
\end{equation}
\end{remark}

Optimality conditions for $(\MPEC)$ will be investigated and
formulated by means of tools of variational analysis. This
area provides adequate mathematical resources for addressing
such troublesome constraint systems as those defined by
solution mappings to parametric variational problems.
Let us first recall a geometric qualification of sets, which
will be useful to treat separately the constraints $\xi\in\Omega$
and $x\in\Equi(\xi)$.
The property formalized below refers to a certain ``good" mutual
arrangements of several sets in space. It is a regularity property
which appeared under different names in different contexts, e.g.
in the convergence theory of projection methods for feasibility
problems and, as a qualification condition, in subdifferential
and normal calculus (see \cite{BauBor96,KrLuTh18}).

\begin{definition}[Subtransversality of sets]
A pair of subsets $S_1,\, S_2$ of an Euclidean space $\X$ is said to be
{\it subtransversal} at $\bar s\in S_1\cap S_2$ if there exist
positive constants $\alpha$ and $\delta$ such that
$$
 \left(S_1+(\alpha\rho)\Uball\right)\cap
 \left(S_2+(\alpha\rho)\Uball\right)\cap\ball{\bar s}{\delta}
 \subseteq (S_1\cap S_2)+\rho\Uball,
 \quad\forall \rho\in [0,\delta).
$$
\end{definition}

This property admits the following equivalent metric reformulation,
which will be exploited in the sequel: the pair  $S_1,\, S_2$ is
subtransversal at $\bar s$ iff there exist $\kappa,\, r>0$ such that
$$
  \dist{w}{S_1\cap S_2}\le\kappa\max\{\dist{w}{S_1},
  \dist{w}{S_2}\},\quad\forall w\in\ball{\bar s}{r}
$$
(see \cite[Theorem 1(ii)]{KrLuTh18}).
The geometric idea behind the above metric inequality should
be transparent: ``if you are close to both the sets of the pair,
then the intersection cannot be too far away" \cite{BauBor96}.
Since subtransversality will be employed as a qualification condition
in treating the subtle constraint system of $(\MPEC)$, it is
useful to recall situations, in which this property takes place.
If it is $\bar s\in\inte(S_1\cap S_2)$, then the pair $S_1,\, S_2$
is subtransversal at $\bar s$.
If $S_1$ and $S_2$ are closed and convex and $\nullv\in\inte(S_1-S_2)$,
the pair $S_1,\, S_2$ is subtransversal at each point $\bar s
\in S_1\cap S_2$.
In particular, a pair of polyhedral convex sets turns out to be subtransversal at
any intersection point (see \cite[Corollary 5.26]{BauBor96}).
Besides, from the metric reformulation it is clear that
the pair $S$ and $\X$ is subtransversal at any $\bar s\in S$,
whatever the set $S$ is.
Another sufficient condition for subtransversality will be formulated
in the next subsection by means of nonsmooth analysis constructions.

A further property, which will come into play, relates the covering
behaviour of mappings. A mapping $g:\X\longrightarrow\Y$ between
Euclidean spaces is said to open on $S\subseteq\X$ with a linear rate
$\alpha>0$ if
$$
  g(\ball{x}{r})\supseteq\ball{g(x)}{\alpha r},\quad\forall
  r>0,\ \forall x\in S.
$$
Openness with a linear rate (a.k.a. covering/surjection property)
is a crucial property equivalent to metric regularity,
whose various phenomenology has been deeply investigated and
largely employed in variational analysis (see \cite{Ioff00},
\cite[Chapter 1.2.3]{Mord06a},\cite[Chapther 3]{Mord18},
\cite[Chapter 9.G]{RocWet98}). In particular, a linear mapping
$\Lambda:\X\longrightarrow\Y$ between Euclidean spaces is open
on $S=\X$ iff it is onto, i.e. $\Lambda\X=\Y$.

\subsection{Elements of Set-Valued Analysis}

Variational methods often require some semicontinuity property
on involved functions to work. In the case of the auxiliary function
$\nu$, its lower semicontinuity can be obtained by combining
semicontinuity properties of $\Kmap$ and $f$. Recall that a
mapping $g:\X\longrightarrow\Y$ between Euclidean spaces, with $\Y$
partially ordered by a convex cone $C$, is said to be $C$-u.s.c.
at $x_0\in\X$ if for every neighbourhood $V$ of $g(x_0)$
there exists a neighbourhood $U$ of $x_0$ such that
$g(x)\in\ V-C$ for every $x\in U$.

\begin{lemma}[Lower semicontinuity of $\nu$]  \label{lem:lscni}
Let the function $\nu:\R^p\times\R^n\longrightarrow [0,+\infty]$
be defined as in $(\ref{eq:defni})$. If
\begin{itemize}

\item[(i)] $\Kmap:\R^p\rightrightarrows\R^n$ is l.s.c.;

\item[(ii)] $f:\R^p\times\R^n\times\R^n\longrightarrow\R^m$
is $C$-u.s.c.;
\end{itemize}
then $\nu$ is l.s.c. on $\R^p\times\R^n$.
\end{lemma}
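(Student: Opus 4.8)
The plan is to verify lower semicontinuity in its sequential form. Fix $(\bar\xi,\bar x)\in\R^p\times\R^n$ and an arbitrary sequence $(\xi_k,x_k)\to(\bar\xi,\bar x)$; I would show that $\liminf_{k\to\infty}\nu(\xi_k,x_k)\ge\nu(\bar\xi,\bar x)$. If $\nu(\bar\xi,\bar x)=0$ there is nothing to prove, since $\nu$ is nonnegative (its values being suprema of distances over the nonempty sets $\Kmap(\xi)$, cf. $(\SAss)$). Otherwise I would fix an arbitrary real $\lambda$ with $0<\lambda<\nu(\bar\xi,\bar x)$ and prove that $\nu(\xi_k,x_k)>\lambda$ for all large $k$; letting then $\lambda\uparrow\nu(\bar\xi,\bar x)$ yields the claim, the same argument with $\lambda$ arbitrarily large covering the case $\nu(\bar\xi,\bar x)=+\infty$.

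By the definition of $\nu$ as a supremum over $\Kmap(\bar\xi)$, I can select a witness $\bar z\in\Kmap(\bar\xi)$ with $\dist{\bar y}{C}>\lambda$, where $\bar y:=f(\bar\xi,\bar x,\bar z)$, and set $\eta:=\dist{\bar y}{C}-\lambda>0$. Invoking hypothesis (i), the lower semicontinuity of $\Kmap$ at $\bar\xi$, since $\bar z\in\Kmap(\bar\xi)$ and $\xi_k\to\bar\xi$, I may pick $z_k\in\Kmap(\xi_k)$ with $z_k\to\bar z$, so that $(\xi_k,x_k,z_k)\to(\bar\xi,\bar x,\bar z)$. Writing $y_k:=f(\xi_k,x_k,z_k)$ and applying hypothesis (ii), the $C$-upper semicontinuity of $f$ at $(\bar\xi,\bar x,\bar z)$, with the neighbourhood $V:=\bar y+(\eta/2)\inte\Uball$, I obtain an index $k_0$ such that $y_k\in V-C$ for every $k\ge k_0$; that is, $y_k=v_k-c_k$ with $\|v_k-\bar y\|<\eta/2$ and $c_k\in C$.

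The \emph{crux} is a monotonicity property of $y\mapsto\dist{y}{C}$ with respect to the cone: since $C$ is a convex cone one has $c+C\subseteq C$, whence
$$
  \dist{y-c}{C}=\inf_{c'\in C}\|y-(c+c')\|\ge\inf_{c''\in C}\|y-c''\|=\dist{y}{C},\qquad\forall c\in C;
$$
in words, subtracting a vector of $C$ never decreases the distance to $C$. Combining this with the $1$-Lipschitz continuity of $\dist{\cdot}{C}$, I would estimate, for $k\ge k_0$,
$$
  \nu(\xi_k,x_k)\ge\dist{y_k}{C}=\dist{v_k-c_k}{C}\ge\dist{v_k}{C}\ge\dist{\bar y}{C}-\tfrac{\eta}{2}=\lambda+\tfrac{\eta}{2}>\lambda,
$$
which is exactly what is needed to conclude.

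I expect the only nonroutine point to be the reconciliation, in the last chain of inequalities, of the two hypotheses: hypothesis (ii) controls $y_k=f(\xi_k,x_k,z_k)$ only up to subtraction of an element $c_k\in C$, and a priori this extra term could ruin a lower bound on the distance to $C$. The observation that precisely such a perturbation increases (or leaves unchanged) $\dist{\cdot}{C}$ is what makes $C$-upper semicontinuity --- rather than full continuity --- the natural and sufficient assumption on $f$. Everything else (the reduction to sequences, the choice of the witness $\bar z$, and its transfer through the lower semicontinuity of $\Kmap$) is routine.
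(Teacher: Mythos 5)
Your proof is correct and takes essentially the same route as the paper's, which simply delegates the two steps you carry out by hand to citations: the lower semicontinuity of $(\xi,x,z)\mapsto\dist{f(\xi,x,z)}{C}$ under $C$-upper semicontinuity of $f$ (your monotonicity observation $\dist{y-c}{C}\ge\dist{y}{C}$ for $c\in C$) is \cite[Lemma 2.1]{Uder22}, and the lower semicontinuity of a supremum over a lower semicontinuous correspondence (your witness-transfer of $\bar z$ to $z_k\in\Kmap(\xi_k)$) is \cite[Lemma 17.29]{AliBor06}. Your sequential argument is a correct, self-contained unpacking of both.
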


\begin{proof}
According to \cite[Lemma 2.1]{Uder22}, by hypothesis (ii)
function $(\xi,x,z)\mapsto\dist{f(\xi,x,z)}{C}$ is l.s.c.
on $\R^p\times\R^n\times\R^n$. Thus, since the set-valued mapping
$\widetilde{\Kmap}:\R^p\times\R^n\rightrightarrows\R^n$, defined by
$\widetilde{\Kmap}(\xi,x)=\Kmap(\xi)$ is l.s.c. owing to hypothesis (i),
it is possible to invoke \cite[Lemma 17.29]{AliBor06}, which ensures
that the function given by
$$
  \nu(\xi,x)=\sup_{z\in\widetilde{\Kmap}(\xi,x)}\dist{f((\xi,x),z)}{C}
  =\sup_{z\in\Kmap(\xi)}\dist{f(\xi,x,z)}{C}
$$
is l.s.c. on $\R^p\times\R^n$, thereby completing the proof.
\end{proof}

\begin{lemma}[Lower semicontinuity of $\mu$]     \label{lem:lscmi}
Let the function $\mu:\R^p\times\R^n\longrightarrow [0,+\infty]$
be defined as in $(\ref{eq:defmi})$. If $\Kmap:\R^p
\rightrightarrows\R^n$ is u.s.c., then $\mu$ is l.s.c.
on $\R^p\times\R^n$.
\end{lemma}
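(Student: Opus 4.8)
The plan is to verify lower semicontinuity pointwise at an arbitrary $(\bar\xi,\bar x)\in\R^p\times\R^n$, exploiting the upper semicontinuity of $\Kmap$ to control how the values $\Kmap(\xi)$ can approach $\bar x$ as $\xi\to\bar\xi$. I would set $d_0=\mu(\bar\xi,\bar x)=\dist{\bar x}{\Kmap(\bar\xi)}$. Since $\mu$ takes values in $[0,+\infty]$, the case $d_0=0$ is immediate; so I would assume $d_0>0$ and fix an arbitrary $\epsilon\in(0,d_0)$. The goal is then to produce a neighbourhood of $(\bar\xi,\bar x)$ on which $\mu>d_0-\epsilon$.

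The key step is a suitable choice of an open superset of $\Kmap(\bar\xi)$. Because every $z\in\Kmap(\bar\xi)$ satisfies $\|z-\bar x\|\ge d_0$, the open set $V=\R^n\setminus\ball{\bar x}{d_0-\epsilon/2}$ contains $\Kmap(\bar\xi)$. By the assumed upper semicontinuity of $\Kmap$ at $\bar\xi$, there exists a neighbourhood $U$ of $\bar\xi$ such that $\Kmap(\xi)\subseteq V$ for every $\xi\in U$; equivalently, $\|z-\bar x\|>d_0-\epsilon/2$ for all $z\in\Kmap(\xi)$ and all $\xi\in U$, whence $\dist{\bar x}{\Kmap(\xi)}\ge d_0-\epsilon/2$ throughout $U$. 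I stress that no compactness of $\Kmap(\bar\xi)$ is needed here: the definition of upper semicontinuity applies to any open set containing $\Kmap(\bar\xi)$, bounded or not.

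Finally I would transfer this estimate from $\bar x$ to nearby points $x$ by means of the $1$-Lipschitz continuity of the map $x\mapsto\dist{x}{S}$, valid for any nonempty $S$ (here $S=\Kmap(\xi)$, which is nonempty and closed by $(\SAss)$). For $\|x-\bar x\|<\epsilon/2$ and $\xi\in U$ this yields $\mu(\xi,x)=\dist{x}{\Kmap(\xi)}\ge\dist{\bar x}{\Kmap(\xi)}-\|x-\bar x\|>d_0-\epsilon$, so that $U\times\ball{\bar x}{\epsilon/2}$ is the desired neighbourhood and lower semicontinuity of $\mu$ at $(\bar\xi,\bar x)$ follows.

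I do not expect a serious obstacle; the only delicate point is the correct application of upper semicontinuity through the complement-of-ball open set $V$, together with the observation that this works irrespective of whether $\Kmap(\bar\xi)$ is bounded. As an alternative, a purely sequential argument is available, resting only on the standing assumption $(\SAss)$: given $(\xi_k,x_k)\to(\bar\xi,\bar x)$ realizing the lower limit $\ell$ of $\mu$, one selects nearest points $z_k\in\Kmap(\xi_k)$ (which exist since the values are closed), observes that $\{z_k\}$ is bounded because $\{x_k\}$ is, extracts a convergent subsequence $z_k\to\bar z$, and invokes the closedness of $\graph\Kmap$ to conclude $\bar z\in\Kmap(\bar\xi)$ and hence $\mu(\bar\xi,\bar x)\le\|\bar x-\bar z\|=\ell$.
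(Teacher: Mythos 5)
Your main argument is correct and is essentially the same in spirit as the paper's: both are direct $\epsilon$-arguments that combine upper semicontinuity of $\Kmap$ at $\bar\xi$ with the $1$-Lipschitz continuity of $x\mapsto\dist{x}{S}$. The difference is only in how u.s.c.\ is invoked: the paper tests it against the enlargement $\inte\ball{\Kmap(\bar\xi)}{\epsilon}$, obtaining $\exc{\Kmap(\xi_k)}{\Kmap(\bar\xi)}\le\epsilon$ and then a three-term triangle inequality $\dist{\bar x}{\Kmap(\bar\xi)}\le\|\bar x-x_k\|+\dist{x_k}{\Kmap(\xi_k)}+\exc{\Kmap(\xi_k)}{\Kmap(\bar\xi)}$, whereas you test it against the complement of the closed ball $\ball{\bar x}{d_0-\epsilon/2}$ and get the lower bound on $\dist{\bar x}{\Kmap(\xi)}$ in one step; both uses are legitimate consequences of the topological definition of u.s.c., and neither requires compactness of $\Kmap(\bar\xi)$. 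Your parenthetical sequential alternative is the genuinely different (and arguably sharper) contribution: by picking near-minimizing points $z_k\in\Kmap(\xi_k)$, noting they stay bounded whenever the lower limit is finite, and passing to the limit via the closedness of $\graph\Kmap$ guaranteed by $(\SAss)$, you show that $\mu$ is l.s.c.\ with no upper semicontinuity assumption at all --- i.e., under the paper's standing assumptions the hypothesis of the lemma is actually redundant. What the paper's route buys is a self-contained estimate phrased through the excess functional, which fits the notation used elsewhere; what your sequential route buys is a weaker hypothesis and a shorter proof.
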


\begin{proof}
Fix $(\bar\xi,\bar x)\in\R^p\times\R^n$ and take an arbitrary sequence
$(\xi_k,x_k)_k$ in $\R^p\times\R^n$ such that
$(\xi_k,x_k)\to (\bar\xi,\bar x)$ as $k\to\infty$. By upper semicontinuity
of $\Kmap$ at $\bar\xi$, fixed any $\epsilon>0$ there exists
$\delta_\epsilon\in (0,\epsilon)$ such that
$$
  \Kmap(\xi)\subseteq\inte\ball{\Kmap(\bar\xi)}{\epsilon},\quad\forall
  \xi\in\ball{\bar\xi}{\delta_\epsilon}.
$$
As $(\xi_k,x_k)_k$ converges to $(\bar\xi,\bar x)$, there exists
$k_\epsilon\in\N$ such that
$$
  (\xi_k,x_k)\in\ball{\bar\xi}{\epsilon}\times\ball{\bar x}{\epsilon},
  \quad\forall k\in\N, \ k\ge k_\epsilon.
$$
Consequently one finds
\begin{eqnarray*}
    \mu(\bar\xi,\bar x) &=& \dist{\bar x}{\Kmap(\bar\xi)}\le\dist{\bar x}{x_k}
    +\dist{x_k}{\Kmap(\xi_k)}+\exc{\Kmap(\xi_k)}{\Kmap(\bar\xi)} \\
    &\le& \epsilon+\mu(\xi_k,x_k)+\epsilon, \quad\forall k\in\N, \ k\ge k_\epsilon.
\end{eqnarray*}
This implies that $\displaystyle\liminf_{k\to\infty}\mu(\xi_k,x_k)\ge \mu(\bar\xi,\bar x)
-2\epsilon$, and so, by arbitrariness of $\epsilon>0$, it shows that $\mu$
is l.s.c. at $(\bar\xi,\bar x)$.
\end{proof}

The employment of a penalization technique to develop the present approach
requires  sufficient conditions for the set $\graph\Equi$ to be closed.
The next result serves the purpose.

\begin{corollary}[Closure of $\graph\Equi$]
With reference to a class of problem $(\VEP(\xi))$, with $\xi\in\R^p$,
suppose that:
\begin{itemize}

\item[(i)] $\Kmap:\R^p\rightrightarrows\R^n$ is continuous;

\item[(ii)] $f:\R^p\times\R^n\times\R^n\longrightarrow\R^m$
is $C$-u.s.c..
\end{itemize}
Then $\mf$ is l.s.c. and $\graph\Equi$ is closed.
\end{corollary}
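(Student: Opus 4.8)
The plan is to obtain the lower semicontinuity of $\mf$ by treating its two defining summands separately, and then to read off the closure of $\graph\Equi$ directly from the functional characterization \eqref{eq:grphEquichar}. The decisive observation is that continuity of a set-valued mapping is exactly the conjunction of its lower and upper semicontinuity, and these are precisely the hypotheses demanded, one apiece, by the two preceding lemmas. Thus the corollary should assemble almost mechanically from Lemma \ref{lem:lscni} and Lemma \ref{lem:lscmi}.

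First I would apply Lemma \ref{lem:lscni}. By hypothesis (i) the mapping $\Kmap$ is continuous, hence in particular l.s.c.; combined with hypothesis (ii), that $f$ is $C$-u.s.c., this yields that $\nu$ is l.s.c. on $\R^p\times\R^n$. Next, exploiting the upper semicontinuity of $\Kmap$ (again a consequence of its continuity), Lemma \ref{lem:lscmi} gives that $\mu$ is l.s.c. on the same space. Since both $\nu$ and $\mu$ take values in $[0,+\infty]$, their pointwise sum involves no indeterminate form $\infty-\infty$, so the elementary fact that a sum of lower semicontinuous functions is lower semicontinuous applies without caveat, and $\mf=\nu+\mu$ is therefore l.s.c. on $\R^p\times\R^n$.

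Finally, lower semicontinuity of $\mf$ forces every sublevel set $[\mf\le 0]$ to be closed; by the identity $\graph\Equi=[\mf\le 0]$ recorded in \eqref{eq:grphEquichar}, the closure of $\graph\Equi$ follows at once. I do not anticipate any genuine obstacle in this argument: it is essentially a corollary packaged from the two lemmas, and the only step warranting a moment's care is the sum-of-l.s.c.\ passage, which is rendered harmless by the nonnegativity of both summands.
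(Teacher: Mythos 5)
Your proposal is correct and follows exactly the paper's own argument: continuity of $\Kmap$ supplies the lower semicontinuity needed by Lemma \ref{lem:lscni} and the upper semicontinuity needed by Lemma \ref{lem:lscmi}, the sum $\mf=\nu+\mu$ is then l.s.c., and closedness of $\graph\Equi$ follows from the characterization $(\ref{eq:grphEquichar})$. Your added remark that the nonnegativity of the summands rules out any $\infty-\infty$ issue is a harmless refinement the paper leaves implicit.
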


\begin{proof}
Upon hypotheses (i) and (ii), it becomes
possible to combine Lemma \ref{lem:lscni} with Lemma \ref{lem:lscmi},
thereby obtaining that $\mf$ is l.s.c., as a sum of l.s.c. functions.
Thus the set $[\mf\le 0]$ is closed, so the thesis becomes a consequence
of the characterization in $(\ref{eq:grphEquichar})$.
\end{proof}

The auxiliary functions $\nu$ and $\mu$ also inherit some fruitful convexity
properties from convexity properties of $\Kmap$ and $f$, which are recalled
next.
Let $C\subseteq\Y$ be a convex cone in $\Y$. A mapping $g:\X\longrightarrow\Y$
between Euclidean spaces is said to be $C$-concave if for every $x_1,\, x_2
\in\X$ and $t\in [0,1]$, it is
$$
  tg(x_1)+(1-t)g(x_2)\parord g(tx_1+(1-t)x_2),
$$
or, equivalently,
$$
  g(tx_1+(1-t)x_2)-tg(x_1)-(1-t)g(x_2)\in C.
$$
A set-valued mapping $F:\X\rightrightarrows\Y$ between Euclidean spaces
is said to be:

\begin{itemize}

\item[(i)] convex on $\X$, if for every $x_1,\, x_2\in\X$ and $t\in [0,1]$, it is
$$
  F(tx_1+(1-t)x_2)\supseteq tF(x_1)+(1-t)F(x_2);
$$

\item[(ii)] concave on $\X$, if for every $x_1,\, x_2\in\X$ and $t\in [0,1]$, it is
$$
  F(tx_1+(1-t)x_2)\subseteq tF(x_1)+(1-t)F(x_2);
$$

\item[(iii)] affine on $\X$, if it is both convex and concave, i.e.
for every $x_1,\, x_2\in\X$ and $t\in [0,1]$, it is
$$
  F(tx_1+(1-t)x_2)=tF(x_1)+(1-t)F(x_2).
$$
\end{itemize}
The above notions are extensions of convexity/concavity for functions,
which are well recognized and largely exploited in optimization and variational
analysis.
Notice that in cases (i) and (iii) $F$ takes convex values, whereas
this may not be true in case (ii).

\begin{lemma}[Convexity of $\nu$]    \label{lem:convnu}
With reference to a problem $(\VEP(\xi))$, suppose that:

\begin{itemize}

\item[(i)] $\Kmap:\R^p\rightrightarrows\R^n$ is concave;

\item[(ii)] $f:\R^p\times\R^n\times\R^n\longrightarrow\R^m$
is $C$-concave;

\end{itemize}
Then, the function $\nu$ defined as in $(\ref{eq:defni})$
is convex on $\R^p\times\R^n$.
\end{lemma}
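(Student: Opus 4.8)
The plan is to reduce the convexity of $\nu$ to two elementary features of the distance function $\dist{\cdot}{C}$ associated with the convex cone $C$. The first is its ordinary convexity, which holds because $C$ is convex. The second, which is the real engine of the argument, is a monotonicity property with respect to the order $\parord$: namely that
$$
  y_2\parord y_1 \quad\Longrightarrow\quad \dist{y_1}{C}\le\dist{y_2}{C}.
$$
Once these two facts are available, the concavity of $\Kmap$ lets me represent any point of $\Kmap$ at a convex combination of parameters, while the $C$-concavity of $f$ produces an order relation that, read through the monotonicity property, delivers the desired estimate.

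First I would establish the monotonicity property, which I expect to be the only genuine (if modest) obstacle, since it is the single place where the conical structure of $C$ — rather than mere convexity — is exploited. Suppose $y_1-y_2\in C$. Writing $\dist{y_1}{C}=\inf_{c\in C}\|y_1-c\|$ and performing the substitution $c=c'+(y_1-y_2)$, one finds $\dist{y_1}{C}=\inf_{c'\in C-(y_1-y_2)}\|y_2-c'\|=\dist{y_2}{C-(y_1-y_2)}$. Because $C$ is a convex cone, it is closed under addition, so $C+(y_1-y_2)\subseteq C$, equivalently $C\subseteq C-(y_1-y_2)$; enlarging the set over which a distance is computed can only decrease it, whence $\dist{y_2}{C-(y_1-y_2)}\le\dist{y_2}{C}$, and the claimed inequality follows.

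It then remains to carry out the main estimate. Fix $(\xi_1,x_1),(\xi_2,x_2)\in\R^p\times\R^n$ and $t\in[0,1]$, and put $(\xi_t,x_t)=t(\xi_1,x_1)+(1-t)(\xi_2,x_2)$. Let $z\in\Kmap(\xi_t)$ be arbitrary. By concavity of $\Kmap$ (hypothesis (i)) there exist $z_1\in\Kmap(\xi_1)$ and $z_2\in\Kmap(\xi_2)$ with $z=tz_1+(1-t)z_2$, so that the triple $(\xi_t,x_t,z)$ is precisely the convex combination $t(\xi_1,x_1,z_1)+(1-t)(\xi_2,x_2,z_2)$. Applying the $C$-concavity of $f$ (hypothesis (ii)) to these two triples gives
$$
  t\,f(\xi_1,x_1,z_1)+(1-t)\,f(\xi_2,x_2,z_2)\parord f(\xi_t,x_t,z).
$$
Feeding this relation into the monotonicity property yields
$$
  \dist{f(\xi_t,x_t,z)}{C}\le\dist{t\,f(\xi_1,x_1,z_1)+(1-t)\,f(\xi_2,x_2,z_2)}{C},
$$
and then the convexity of $\dist{\cdot}{C}$, together with the bounds $\dist{f(\xi_1,x_1,z_1)}{C}\le\nu(\xi_1,x_1)$ and $\dist{f(\xi_2,x_2,z_2)}{C}\le\nu(\xi_2,x_2)$ coming from the defining suprema, gives $\dist{f(\xi_t,x_t,z)}{C}\le t\,\nu(\xi_1,x_1)+(1-t)\,\nu(\xi_2,x_2)$. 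As $z\in\Kmap(\xi_t)$ was arbitrary, taking the supremum over such $z$ on the left produces $\nu(\xi_t,x_t)\le t\,\nu(\xi_1,x_1)+(1-t)\,\nu(\xi_2,x_2)$, which is exactly the convexity of $\nu$; the case in which the right-hand side equals $+\infty$ is covered trivially, all quantities lying in $[0,+\infty]$.
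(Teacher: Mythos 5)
Your argument is correct and follows essentially the same route as the paper: decompose $z\in\Kmap(t\xi_1+(1-t)\xi_2)$ via the concavity of $\Kmap$, exploit the convexity of $(\xi,x,z)\mapsto\dist{f(\xi,x,z)}{C}$ induced by the $C$-concavity of $f$, and pass to the supremum. The only difference is that the paper imports this last convexity fact from \cite[Lemma 2.5]{Uder22}, whereas you prove it inline by splitting it into the order-monotonicity and the ordinary convexity of $\dist{\cdot}{C}$ — a correct and self-contained substitute for the citation.
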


\begin{proof}
Let us recall first that on account of \cite[Lemma 2.5]{Uder22}
the $C$-concavity of $f$ implies the convexity of the function
$(\xi,x,z)\mapsto\dist{f(\xi,x,z)}{C}$.
Take arbitrary $(\xi_1,x_1)$, $(\xi_2,x_2)\in\R^p\times\R^n$ and
$t\in [0,1]$. By virtue of hypothesis (i), for any $z\in\Kmap
(t\xi_1+(1-t)\xi_2)\subseteq t\Kmap(\xi_1)+(1-t)\Kmap(\xi_2)$
there exist $z_1\in\Kmap(\xi_1)$ and $z_2\in\Kmap(\xi_2)$
such that $z=tz_1+(1-t)z_2$. Thus, by exploiting the convexity of
$(\xi,x,z)\mapsto\dist{f(\xi,x,z)}{C}$, one obtains
\begin{eqnarray*}
  \nu(t(\xi_1,x_1)+(1-t)(\xi_2,x_2)) &=& \sup_{z\in\Kmap
(t\xi_1+(1-t)\xi_2)}\dist{f(t(\xi_1,x_1,z)+(1-t)(\xi_2,x_2,z))}{C}  \\
  &\le& \sup_{z_1\in\Kmap(\xi_1) \atop z_2\in\Kmap(\xi_2)}
  \dist{f(t(\xi_1,x_1,z_1)+(1-t)(\xi_2,x_2,z_2))}{C}  \\
  &\le& \sup_{z_1\in\Kmap(\xi_1) \atop z_2\in\Kmap(\xi_2)}
  [t\dist{f(\xi_1,x_1,z_1)}{C}+(1-t)\dist{f(\xi_2,x_2,z_2)}{C}]  \\
  &=& t\nu(\xi_1,x_1)+(1-t)\nu(\xi_2,x_2),
\end{eqnarray*}
which shows the convexity of $\nu$.

\end{proof}

\begin{lemma}[Convexity of $\mu$]    \label{lem:convmu}
With reference to a problem $(\VEP(\xi))$, suppose that
the set-valued mapping $\Kmap:\R^p\rightrightarrows\R^n$
is convex. Then the function $\mu$ defined as in $(\ref{eq:defmi})$
is convex on $\R^p\times\R^n$.
\end{lemma}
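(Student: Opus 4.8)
The plan is to verify the convexity inequality directly from the definition of $\mu$ together with the convexity of $\Kmap$. Fix arbitrary $(\xi_1,x_1),\,(\xi_2,x_2)\in\R^p\times\R^n$ and $t\in[0,1]$, and set $\xi=t\xi_1+(1-t)\xi_2$ and $x=tx_1+(1-t)x_2$. The goal is to establish
$$
  \mu(\xi,x)\le t\mu(\xi_1,x_1)+(1-t)\mu(\xi_2,x_2).
$$
Since the standing assumption $(\SAss)$ guarantees $\dom\Kmap=\R^p$, the values $\Kmap(\xi_1)$ and $\Kmap(\xi_2)$ are nonempty, so both distances on the right-hand side are finite and the infima defining them are taken over nonempty sets.

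The first key step is to exploit the convexity of $\Kmap$, which yields the inclusion $t\Kmap(\xi_1)+(1-t)\Kmap(\xi_2)\subseteq\Kmap(\xi)$. Consequently, for any choice of $y_1\in\Kmap(\xi_1)$ and $y_2\in\Kmap(\xi_2)$, the convex combination $ty_1+(1-t)y_2$ belongs to $\Kmap(\xi)$, whence
$$
  \mu(\xi,x)=\dist{x}{\Kmap(\xi)}\le\|x-(ty_1+(1-t)y_2)\|
  =\|t(x_1-y_1)+(1-t)(x_2-y_2)\|.
$$
Applying the triangle inequality to the right-hand side gives the bound $t\|x_1-y_1\|+(1-t)\|x_2-y_2\|$.

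The final step is to take the infimum over $y_1\in\Kmap(\xi_1)$ and $y_2\in\Kmap(\xi_2)$ independently. Because the two summands depend on separate variables, the infimum of the sum splits into the sum of the infima, delivering $t\,\dist{x_1}{\Kmap(\xi_1)}+(1-t)\,\dist{x_2}{\Kmap(\xi_2)}=t\mu(\xi_1,x_1)+(1-t)\mu(\xi_2,x_2)$, which is precisely the desired inequality. I do not anticipate any serious obstacle: the argument is a short combination of the superset property coming from convexity of $\Kmap$ with the triangle inequality, the only point requiring mild care being the nonemptiness of the values ensured by $(\SAss)$, so that the infima are genuine and the separation of variables in the last step is legitimate.
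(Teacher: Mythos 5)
Your argument is correct and coincides with the paper's own proof: both use the inclusion $t\Kmap(\xi_1)+(1-t)\Kmap(\xi_2)\subseteq\Kmap(t\xi_1+(1-t)\xi_2)$ coming from convexity of $\Kmap$ to bound the distance, then apply the triangle inequality and split the resulting infimum over the two independent variables. No gaps.
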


\begin{proof}
Take arbitrary $\xi_1,\, \xi_2\in\R^p$, $x_1,\, x_2\in\R^n$, and
$t\in [0,1]$. By convexity of $\Kmap$, one has $\Kmap(t\xi_1+(1-t)\xi_2)
\supseteq t\Kmap(\xi_1)+(1-t)\Kmap(\xi_2)$, which implies
\begin{eqnarray*}
    \mu(t(\xi_1,x_1)+(1-t)(\xi_2,x_2)) &=& \dist{tx_1+(1-t)x_2}{\Kmap(t\xi_1+(1-t)\xi_2)}  \\
    &\le&  \dist{tx_1+(1-t)x_2}{t\Kmap(\xi_1)+(1-t)\Kmap(\xi_2)}  \\
    &=& \inf_{z_1\in\Kmap(\xi_1),z_2\in\Kmap(\xi_2)}
    \|tx_1+(1-t)x_2-(tz_1+(1-t)z_2)\|     \\
    &\le& \inf_{z_1\in\Kmap(\xi_1),z_2\in\Kmap(\xi_2)}
    [t\|x_1-z_1\|+(1-t)\|x_2-z_2\|]   \\
    &=& t\inf_{z_1\in\Kmap(\xi_1)}\|x_1-z_1\|+
    (1-t)\inf_{z_2\in\Kmap(\xi_2)}\|x_2-z_2\|  \\
    &=& t\mu(\xi_1,x_1)+(1-t)\mu(\xi_2,x_2).
\end{eqnarray*}
\end{proof}

\begin{remark}
It should be noticed that, as a consequence of the standing assumption
$(\SAss)$, it is $\dom\mu=\R^p\times\R^n$. So, in the light of the Lemma
\ref{lem:convmu} and a well-known property  of convex functions acting
in finite-dimensional spaces, whenever $\Kmap$ is convex, $\mu$ turns
out to be locally Lipschitz around each point of $\R^p\times\R^n$.
\end{remark}

\begin{corollary}[Convexity of $\Equi$]
With reference to a class of problems $(\VEP(\xi))$, with $\xi\in\R^p$,
suppose that:
\begin{itemize}

\item[(i)] $\Kmap:\R^p\rightrightarrows\R^n$ is affine;

\item[(ii)] $f:\R^p\times\R^n\times\R^n\longrightarrow\R^m$
is $C$-concave.

\end{itemize}
Then, $\mf:\R^\times\R^n\longrightarrow\R\cup\{\mp\infty\}$ and
$\Equi:\R^p\rightrightarrows\R^n$ are convex.
\end{corollary}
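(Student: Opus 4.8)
The plan is to recognize this corollary as an immediate synthesis of Lemmas \ref{lem:convnu} and \ref{lem:convmu} with the graphical characterization $(\ref{eq:grphEquichar})$. The pivotal observation is that an affine set-valued mapping is, by definition, simultaneously convex and concave. Thus hypothesis (i) delivers at once both the concavity of $\Kmap$ that Lemma \ref{lem:convnu} requires and the convexity of $\Kmap$ that Lemma \ref{lem:convmu} requires.

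First I would apply Lemma \ref{lem:convnu}: since $\Kmap$ is concave (being affine) and $f$ is $C$-concave by hypothesis (ii), the function $\nu$ of $(\ref{eq:defni})$ is convex on $\R^p\times\R^n$. Next I would apply Lemma \ref{lem:convmu}: since $\Kmap$ is convex (again because it is affine), the function $\mu$ of $(\ref{eq:defmi})$ is convex on the same space. It follows that $\mf=\nu+\mu$ is convex, being a sum of convex functions.

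It then remains to propagate convexity from $\mf$ to the solution mapping $\Equi$. Here the plan is to invoke the characterization $\graph\Equi=[\mf\le 0]$ from $(\ref{eq:grphEquichar})$. As the $0$-sublevel set of a convex function, $[\mf\le 0]$ is a convex subset of $\R^p\times\R^n$, so $\graph\Equi$ is convex. I would finish by appealing to the standard equivalence between convexity of a set-valued mapping and convexity of its graph: given $(\xi_1,x_1),(\xi_2,x_2)\in\graph\Equi$ and $t\in[0,1]$, convexity of the graph yields $(t\xi_1+(1-t)\xi_2,\,tx_1+(1-t)x_2)\in\graph\Equi$, which is exactly the inclusion $\Equi(t\xi_1+(1-t)\xi_2)\supseteq t\Equi(\xi_1)+(1-t)\Equi(\xi_2)$ defining convexity of $\Equi$.

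No substantive obstacle arises: the argument is essentially a matter of assembling previously established facts. The only point demanding a moment's attention is the unpacking of the single affine hypothesis into its two one-sided halves, so that the two convexity lemmas---one needing $\Kmap$ concave, the other needing $\Kmap$ convex---can be brought to bear simultaneously.
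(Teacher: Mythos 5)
Your proposal is correct and follows essentially the same route as the paper: combine Lemma \ref{lem:convnu} and Lemma \ref{lem:convmu} (using that an affine $\Kmap$ is both concave and convex) to get convexity of $\mf=\nu+\mu$, then pass to $\Equi$ via the characterization $\graph\Equi=[\mf\le 0]$ and the convexity of sublevel sets of a convex function. Your extra step explicitly translating convexity of $\graph\Equi$ into the defining inclusion for convexity of the set-valued mapping is a harmless elaboration of what the paper leaves implicit.
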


\begin{proof}
In the light of the characterization provided by $(\ref{eq:grphEquichar})$,
it suffices to observe that by Lemma \ref{lem:convnu} e Lemma
\ref{lem:convmu}, functions $\nu$ and $\mu$ are both convex and hence
so is their sum $\mf$. As a convex function, $\mf$ has convex
sublevel sets. Thus the thesis follows from $(\ref{eq:grphEquichar})$
\end{proof}

\vskip.5cm


\subsection{Elements of Nonsmooth Analysis}

Given a subset $S$ of an Euclidean space $\X$ and $\bar x\in S$,
the basic normal cone to $S$ at $\bar x$ is defined by
$$
   \begin{array}{ccc}
   \Ncone{\bar x}{S}= & \Uplim & \cone\left[x-\Proj{x}{S}\right], \\
   & \hbox{\scriptsize $x\to\bar x$ } &
   \end{array}
$$
where $\Uplim_{x\to\bar x}$ denotes the Painlev\'e-Kuratowski
outer/upper limit of a multifunction (see \cite[Chapter 4.B]{RocWet98})
and $\Proj{x}{S}=\{s\in S:\
\|x-s\|=\dist{x}{S}\}$ denotes the Euclidean projector of
$x$ to $S$. Recall that if $S$ is nonempty and closed, then
$\dom\Proj{\cdot}{S}=\X$, and if $S$ is nonempty, closed and convex,
then $\Proj{\cdot}{S}$ is single-valued.
As a direct consequence of the above definition, it is possible to derive
the following formula for the basic normal cone to the Cartesian product
of sets, which will be useful in the sequel: given $\bar x_1\in\Omega_1
\subseteq\R^p$ and $\bar x_2\in\Omega_2\subseteq\R^n$, it holds
$$
  \Ncone{(\bar x_1,\bar x_2)}{\Omega_1\times\Omega_2}=
  \Ncone{\bar x_1}{\Omega_1}\times \Ncone{\bar x_2}{\Omega_2}.
$$

According to the approach to nonsmooth analysis devised in
\cite{Mord06a,Mord18}, basic normals are the fundamental
elements on which further constructions rely, but they also appear
directly in the formulation of optimality conditions and qualification
conditions. Among other things, basic normals allow to express
a sufficient condition for subtransversality: if it is
\begin{equation}     \label{eq:subtransvnconecond}
   \Ncone{\bar s}{S_1}\cap[-\Ncone{\bar s}{S_2}]=
   \{\nullv\},
\end{equation}
the pair $S_1,\, S_2$ turns out to be subtransversal at $\bar s
\in S_1\cap S_2$ (see \cite[Theorem 2(v)]{KrLuTh18}).

Given a set-valued mapping $F:\X\rightrightarrows\Y$ between
Euclidean spaces, its basic coderivative at $(\bar x,\bar y)
\in\graph F$ is the multifunction $\Coder{F}{\bar x}{\bar y}:
\Y\rightrightarrows\X$ taking the values
$$
  \Coder{F}{\bar x}{\bar y}(v)=\{u\in\X:\ (u,-v)\in
  \Ncone{(\bar x,\bar y)}{\graph F}\},\qquad v\in\Y.
$$
Given a function $\psi:\X\longrightarrow\R\cup\{\pm\infty\}$
and $\bar x\in\psi^{-1}(\R)$, its basic (a.k.a. Mordukhovich)
subdifferential at $\bar x$ is defined by
$$
  \partial\psi(\bar x)=\{v\in\X:\ (v,-1)\in\Ncone{(\bar x,
  \psi(\bar x))}{\epi\psi}\},
$$
whereas its singular subdifferential at $\bar x$ is defined by
$$
  \partial^\infty\psi(\bar x)=\{v\in\X:\ (v,0)\in\Ncone{(\bar x,
  \psi(\bar x))}{\epi\psi}\}.
$$
The reader should notice that the above notation is not ambiguous
inasmuch as, whenever a function happens to be convex, its Mordukhovich
basic subdifferential and its subdifferential in the sense of convex
analysis do coincide. The same can be repeated for the normal cone
\`a la Mordukhovich and the normal cone in the sense of convex analysis.

In view of a subsequent employment, it is useful to recall that,
given a nonempty locally closed set $S\subseteq\X$,
since the function $x\mapsto\dist{x}{S}$ is Lipschitz continuous
on $\X$ with constant 1, then in the case $\bar x\in S$ one has
\begin{equation}    \label{eq:bsubddist}
  \partial\dist{\cdot}{S}(\bar x)=\Ncone{\bar x}{S}\cap\Uball
  \qquad\hbox{ and }\qquad
  \partial^\infty\dist{\cdot}{S}(\bar x)=\{\nullv\}
\end{equation}
(see \cite[Theorem 1.33(i)]{Mord18}),
whereas in the case $\bar x\not\in S$ one has
\begin{equation}    \label{eq:bsubdoosdist}
  \partial\dist{\cdot}{S}(\bar x)={\bar x-\Proj{\bar x}{S}
  \over\dist{\bar x}{S} }\subseteq\Usfer.
\end{equation}
(see \cite[Theorem 1.33(ii)]{Mord18}).

The basic subdifferential is known to enjoy a rich calculus.
For the purpose of the present analysis, it is to be recalled that
if $\psi_1,\, \psi_2:\X\longrightarrow\R\cup\{\pm\infty\}$ and
$\bar x\in\dom\psi_1\cap\dom\psi_2$ are such that the
qualification condition
\begin{equation}   \label{eq:singsubdqc}
    \partial^\infty\psi_1(\bar x)\cap\left(
    -\partial^\infty\psi_2(\bar x)\right)=\{\nullv\}
\end{equation}
is satisfied, then the following sum rule holds
$$
  \partial\left(\psi_1+\psi_2\right)(\bar x)\subseteq
  \partial\psi_1(\bar x)+\partial\psi_2(\bar x).
$$
One sees at once that the validity of $(\ref{eq:singsubdqc})$
is ensured in the case $\psi_1$ and $\psi_2$ forms a so-called
semi-Lipschitzian pair of functions, namely $\psi_1$
is l.s.c. around $\bar x$ and $\psi_2$ is locally Lipschitz
around the same point.

Another calculus rule that will be employed refers to the marginal
function $\mu_{\psi,G}$ associate with a function $\psi:\X\longrightarrow\R
\cup\{\pm\infty\}$ and a (closed-graph) set-valued mapping
$G:\X\rightrightarrows\Y$, i.e.
$$
  \mu_{\psi,G}(x)=\inf_{y\in G(x)}\psi(x,y).
$$
To express the basic subdifferential of $\mu_{\psi,G}$, one
needs to introduce the argminimum mapping $M:\X\rightrightarrows\Y$
$$
  M(x)=\{y\in\ G(x):\ \psi(x,y)=\mu_{\psi,G}(x)\}.
$$
Thus, according to \cite[Theorem 4.1(ii)]{Mord18}, if $M$ is locally bounded
around  $\bar x$ with $M(\bar x)\ne\varnothing$, and the qualification condition
\begin{equation}    \label{eq:subdmarmuqc}
   \partial^\infty\psi(\bar x,\bar y)\cap\left[
   -\Ncone{(\bar x,\bar y)}{\graph G}\right]=\{\nullv\}
\end{equation}
is satisfied, then the following outer estimate holds true
\begin{equation}    \label{in:subdmarmu}
   \partial\mu_{\psi,G}(\bar x)\subseteq
   \bigcup \left\{x^*+\Coder{G}{\bar x}{\bar y}(y^*):\
   (x^*,y^*)\in\partial\psi(\bar x,\bar y),\ \bar y\in M(\bar x)\right\}.
\end{equation}

In the context of the present analysis, the rule expressed in
$(\ref{in:subdmarmu})$ enables one to provide an useful outer
estimate of the basic subdifferential of $\mu$.

\begin{lemma}     \label{lem:bsubdmi}
Let $\Kmap:\R^p\rightrightarrows\R^n$ be l.s.c. and let $(\bar\xi,\bar x)
\in\R^p\times\R^n$. Then for the function $\mu$ defined as in $(\ref{eq:defmi})$
the following estimate holds
\begin{equation}     \label{in:bsubdifmi}
    \partial\mu(\bar\xi,\bar x)\subseteq
    \bigcup_{\bar z\in\Proj{\bar x}{\Kmap(\bar\xi)}}
    \left[\Coder{\Kmap}{\bar\xi}{\bar z}(\Uball)\times\Uball\right].
\end{equation}
\end{lemma}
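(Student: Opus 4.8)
The plan is to recognize $\mu$ as a marginal function and apply the outer estimate $(\ref{in:subdmarmu})$. By $(\ref{eq:defmi})$ one has $\mu(\xi,x)=\dist{x}{\Kmap(\xi)}=\inf_{z\in\Kmap(\xi)}\|x-z\|$, so $\mu=\mu_{\psi,G}$, where I take the outer variable to be $w=(\xi,x)\in\R^p\times\R^n$, the inner variable $z\in\R^n$, the cost $\psi((\xi,x),z)=\|x-z\|$, and the constraint mapping $G:\R^p\times\R^n\rightrightarrows\R^n$ given by $G(\xi,x)=\Kmap(\xi)$. With this identification the argminimum mapping associated with $(\psi,G)$ is exactly $M(\xi,x)=\Proj{x}{\Kmap(\xi)}$, whence $M(\bar\xi,\bar x)=\Proj{\bar x}{\Kmap(\bar\xi)}$, which is the index set appearing in $(\ref{in:bsubdifmi})$.

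Before invoking $(\ref{in:subdmarmu})$, I would verify its hypotheses at $(\bar\xi,\bar x)$. Nonemptiness $M(\bar\xi,\bar x)\ne\varnothing$ holds because, by $(\SAss)$, $\Kmap(\bar\xi)$ is a nonempty closed set, onto which the Euclidean projection of a point is nonempty. Local boundedness of $M$ around $(\bar\xi,\bar x)$ follows from the lower semicontinuity of $\Kmap$: fixing $z_0\in\Kmap(\bar\xi)$, lower semicontinuity produces a neighbourhood of $\bar\xi$ on which $\Kmap(\xi)$ meets $\ball{z_0}{1}$, so that $\dist{x}{\Kmap(\xi)}$ stays bounded for $(\xi,x)$ near $(\bar\xi,\bar x)$, and hence $M(\xi,x)=\Proj{x}{\Kmap(\xi)}$ remains inside a fixed ball. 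The set $\graph G$ is closed because it coincides, up to the permutation of coordinates $(\xi,x,z)\mapsto(\xi,z,x)$, with $\graph\Kmap\times\R^n$, which is closed by $(\SAss)$. Finally, the qualification condition $(\ref{eq:subdmarmuqc})$ is satisfied for free: $\psi$ is Lipschitz, so $\partial^\infty\psi(\bar\xi,\bar x,\bar z)=\{\nullv\}$, and the intersection with $-\Ncone{}{\graph G}$ cannot contain anything other than $\nullv$.

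The core of the argument is the computation of the two constructions entering $(\ref{in:subdmarmu})$. For the cost, $\psi$ is convex and finite-valued, so its basic and convex subdifferentials agree; since $\|x-z\|$ does not depend on $\xi$ and the subdifferential of the norm lies in $\Uball$, I obtain $\partial\psi(\bar\xi,\bar x,\bar z)\subseteq\{\nullv\}\times\{(v,-v):\ v\in\Uball\}$ (with $v\in\Usfer$ a single vector when $\bar x\ne\bar z$, and $v$ ranging over $\Uball$ when $\bar x=\bar z$). For the constraint mapping, the product structure $\graph G\cong\graph\Kmap\times\R^n$ together with the product formula for basic normals gives $\Ncone{((\bar\xi,\bar x),\bar z)}{\graph G}=\{(u_\xi,\nullv,u_z):\ (u_\xi,u_z)\in\Ncone{(\bar\xi,\bar z)}{\graph\Kmap}\}$, so that, by the very definition of coderivative, $\Coder{G}{(\bar\xi,\bar x)}{\bar z}(z^*)=\Coder{\Kmap}{\bar\xi}{\bar z}(z^*)\times\{\nullv\}$. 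This last identity, which correctly absorbs the dummy variable $x$ and passes from $G$ to $\Kmap$, is the step I expect to require the most care.

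It then remains to assemble the pieces. Substituting the two computations into $(\ref{in:subdmarmu})$, a generic element of the right-hand side takes the form $(\nullv,v)+\left(\Coder{\Kmap}{\bar\xi}{\bar z}(-v)\times\{\nullv\}\right)=\Coder{\Kmap}{\bar\xi}{\bar z}(-v)\times\{v\}$, with $v\in\Uball$ and $\bar z\in\Proj{\bar x}{\Kmap(\bar\xi)}$. Since $-v$ ranges over $\Uball$ as $v$ does, this set is contained in $\Coder{\Kmap}{\bar\xi}{\bar z}(\Uball)\times\Uball$; taking the union over $\bar z\in\Proj{\bar x}{\Kmap(\bar\xi)}$ yields precisely the estimate $(\ref{in:bsubdifmi})$, completing the proof.
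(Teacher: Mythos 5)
Your proposal is correct and follows essentially the same route as the paper: both recognize $\mu$ as the marginal function of $\widetilde\psi(\xi,x,z)=\|x-z\\|$ over the constraint map $(\xi,x)\leadsto\Kmap(\xi)$, verify local boundedness of the argminimum (projection) mapping via lower semicontinuity of $\Kmap$, dispose of the qualification condition by Lipschitz continuity of the cost, compute the coderivative of the constraint map through the product structure of its graph, and assemble via the outer estimate $(\ref{in:subdmarmu})$. Your version is in fact marginally tidier in two spots --- you keep the coupling $(x^*,z^*)=(v,-v)$ in $\partial\widetilde\psi$ rather than relaxing to $\Uball\times\Uball$, and you bound the projection set directly from the boundedness of $\dist{x}{\Kmap(\xi)}$ --- but these refinements do not change the final estimate.
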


\begin{proof}
Since it is
$$
  \mu(\xi,x)=\inf_{z\in\Kmap(\xi)}\|x-z\|=
  \inf_{z\in\widetilde{\Kmap}(\xi,x)}\widetilde{\psi}(\xi,x,z)
$$
with $\widetilde{\Kmap}:\R^p\times\R^n\rightrightarrows\R^n$ defined
by $\widetilde{\Kmap}(\xi,x)=\Kmap(\xi)$ and $\widetilde{\psi}:\R^p\times\R^n\times\R^n
\longrightarrow\R$ defined by $\widetilde{\psi}(\xi,x,z)=\|x-z\|$,
the idea is to apply formula $(\ref{in:subdmarmu})$ with $G=\widetilde{\Kmap}$,
$\psi=\widetilde{\psi}$, and $\X=\R^p\times\R^n$, $\Y=\R^n$.
Remember that under assumption $(\SAss)$ $\graph\Kmap$ is closed
and observe that, in the current setting, it is
$$
  M(\xi,x)=\Proj{x}{\Kmap(\xi)}.
$$
So, it is $\dom M=\R^p\times\R^n$. Besides, since $\Kmap$ is
supposed to be l.s.c., then the set-valued mapping $(\xi,x)\leadsto\Proj{x}{\Kmap(\xi)}$
is locally bounded around $(\bar\xi,\bar x)$. To see this, take any
$\bar z\in\Proj{\bar x}{\Kmap(\bar \xi)}$ and fix $r_0>0$ in such
a way that $r_0>\|\bar x-\bar z\|$. Since $\bar z\in\inte
\ball{\bar x}{r_0}\cap\Kmap(\bar\xi)\ne\varnothing$, by lower
semicontinuity of $\Kmap$  at $\bar\xi$ there exists $\delta_0
\in(0,r_0)$ such that
$$
  \inte\ball{\bar x}{r_0}\cap\Kmap(\xi)\ne\varnothing,
  \quad\forall\xi\in\ball{\bar x}{\delta_0}.
$$
It follows
$$
  \Proj{\bar x}{\Kmap(\xi)}\subseteq\ball{\bar x}{r_0},
  \quad\forall \xi\in\ball{\bar\xi}{\delta_0}.
$$
As each set-valued mapping $x\leadsto\Proj{x}{\Kmap(\xi)}$ is
Lipschitz continuous ($\xi$ being fixed) with constant $1$,
it holds
$$
  \Proj{x}{\Kmap(\xi)}\subseteq \Proj{\bar x}{\Kmap(\xi)}+
  \|x-\bar x\|\Uball.
$$
Therefore, it results in
$$
  \Proj{x}{\Kmap(\xi)}\subseteq\ball{\bar x}{r_0}+\delta_0\Uball
  \subseteq\ball{\bar x}{2r_0},\quad\forall
  (\xi,x)\in\ball{\bar\xi}{\delta_0}\times \ball{\bar x}{\delta_0},
$$
meaning that the set-valued mapping $(\xi,x)\leadsto\Proj{x}{\Kmap(\xi)}$
is locally bounded around $(\bar\xi,\bar x)$.
Since $\widetilde{\psi}$ is clearly Lipschitz continuous
(with constant $1$), the qualification condition $(\ref{eq:subdmarmuqc})$
is fulfilled. Moreover, as $\widetilde{\psi}$ is constant
with respect to $\xi$, one can write
$$
  \partial\widetilde{\psi}(\bar\xi,\bar x,\bar z)\subseteq
  \{\nullv\}\times\Uball\times\Uball.
$$
On the other hand, according to the coderivative definition,
it is
$$
   \Coder{\widetilde{\Kmap}}{(\bar\xi,\bar x)}{\bar z}(v)=
   \{(\xi^*,x^*)\in\R^p\times\R^n:\ (\xi^*,x^*,-v)\in
   \Ncone{(\bar\xi,\bar x),\bar z}{\graph\widetilde{\Kmap}}\}.
$$
Clearly, the fact that $\widetilde{\Kmap}$ is constant with
respect to $x$ implies
$$
  \graph\widetilde{\Kmap}=(\graph\Kmap)\times\R^n.
$$
Therefore, according to \cite[Proposition 1.4]{Mord18}
it results in
$$
  \Ncone{(\bar\xi,\bar x),\bar z}{\graph\widetilde{\Kmap}}=
  \Ncone{(\bar\xi,\bar z)}{\graph{\Kmap}}\times\Ncone{\bar x}{\R^n}=
  \Ncone{(\bar\xi,\bar z)}{\graph{\Kmap}}\times\{\nullv\}.
$$
On account of the last equality, one obtains
$$
  \Coder{\widetilde{\Kmap}}{(\bar\xi,\bar x)}{\bar z}(v)=
  \Coder{\Kmap}{\bar\xi}{\bar z}(v)\times\{\nullv\}.
$$
By applying the outer estimate $(\ref{in:subdmarmu})$ with
the above elements, one finds
\begin{eqnarray*}
 \partial\mu(\bar\xi,\bar x) &\subseteq &
    \bigcup_{\bar z\in\Proj{\bar x}{\Kmap(\bar\xi)}}
    \left\{(\xi^*,x^*)+\Coder{\widetilde{\Kmap}}{(\bar\xi,\bar x)}{\bar z}(z^*):\
   (\xi^*,x^*,z^*)\in\partial\widetilde{\psi}(\bar\xi,\bar x,\bar z)\right\}  \\
   &\subseteq& \bigcup_{\bar z\in\Proj{\bar x}{\Kmap(\bar\xi)}}
   \left\{(\nullv,u)+\left(\Coder{\Kmap}{\bar\xi}{\bar z}(v)\times\{\nullv\}\right):\
   (u,v)\in\Uball\times\Uball\right\}  \\
    &=& \bigcup_{\bar z\in\Proj{\bar x}{\Kmap(\bar\xi)}}
   \left\{(w,u)\in\R^p\times\R^n:\ w\in\Coder{\Kmap}{\bar\xi}{\bar z}(v),\
   (u,v)\in\Uball\times\Uball\right\},
\end{eqnarray*}
which leads to inclusion $(\ref{in:bsubdifmi})$, thereby completing the proof.
\end{proof}

\begin{remark}
Satisfactory formulae for basic and other limiting subgradients
of the function $\mu$ have been already established in the variational analysis
literature.
In particular, the upper estimates in \cite[Theorem 4.9]{MorNam05} seem to be close
to inclusion $(\ref{in:bsubdifmi})$. Nonetheless, it is worth remarking that
formula $(\ref{in:bsubdifmi})$ refers to both the possible cases $\bar x\in\Kmap(\bar\xi)$
and $\bar x\not\in\Kmap(\bar\xi)$. Moreover, since it has been derived in a
much more special setting, the argument in the proof here proposed avoids
the well-posedness condition imposed in the statement of Theorem 4.9.
\end{remark}

As the constrained problem $(\MPEC)$ will be reduced to an unconstrained one,
it is useful to recall that, according to a basic optimization principle,
whenever $\bar x\in\dom\psi$ is a local unconstrained minimizer of $\psi:\X
\longrightarrow\R\cup\{\pm\infty\}$, it must hold
\begin{equation}   \label{in:buncoptcond}
  \nullv\in\partial\psi(\bar x).
\end{equation}
Whenever $\psi:\R^p\times\R^n\longrightarrow\R\cup\{\mp\infty\}$ and
$(\bar\xi,\bar x)\in\R^p\times\R^n$ are given, the notation $\partialx
\psi(\bar\xi,\bar x)$ is sometimes used to indicate the partial subdifferential
of $\psi$ with respect to $x$, calculated at $(\bar\xi,\bar x)$, i.e.
$\partial\psi(\bar\xi,\cdot)(\bar x)\subseteq\R^n$.

\vskip.5cm


\section{Optimality conditions via a penalization approach}    \label{Sect:3}

Following a geometric approach developed in \cite[Chapter 5.2]{Mord06b},
let us start noticing that a problem of $(\MPEC)$ can be equivalently
reformulated as
\begin{equation}     \label{pr:geomreform}
   \min\varphi(\xi,x) \quad \hbox{ subject to } \quad (\xi,x)\in
   \widetilde{\Omega}\cap\graph\Equi,
\end{equation}
where $\widetilde{\Omega}=\Omega\times\R^n$. It is well known that,
under a locally Lipschitz assumption of $\varphi$, one can convert
the geometric constraint in $(\ref{pr:geomreform})$ into functional terms
by a well-known penalization technique, relying on the existence of
a residual function measuring the constraint violation. The
implementation of this general principle for problems of the form
$(\MPEC)$ runs as follows.

\begin{proposition}[Basic penalization principle]    \label{pro:Penpri}
With reference to a problem $(\MPEC)$, let $(\bar\xi,\bar x)\in\R^p\times\R^n$.
Suppose that:
\begin{itemize}

\item[(i)] $\varphi$ is Lipschitz continuous on $\ball{(\bar\xi,\bar x)}{\rho}$,
for some $\rho>0$, with constant $\ell_\varphi>0$;

\item[(ii)] there exist $\sigma:\ball{(\bar\xi,\bar x)}{\rho}\longrightarrow
[0,+\infty)$, vanishing on $\widetilde{\Omega}\cap\graph\Equi\cap
\ball{(\bar\xi,\bar x)}{\rho}$, and $\tau>0$ such that
$$
  \dist{(\xi,x)}{\widetilde{\Omega}\cap\graph\Equi}\le\tau\sigma(\xi,x),
  \quad\forall (\xi,x)\in\ball{(\bar\xi,\bar x)}{\rho};
$$

\item[(iii)] $\graph\Equi$ is closed and $(\MPEC)$ admits a solution.
\end{itemize}
Then, for any $\lambda>\ell_\varphi\tau$ it holds
\begin{eqnarray*}
  \Argmin\{\varphi(\xi,x):\ (\xi,x)\in \widetilde{\Omega}\cap\graph\Equi\cap
  \ball{(\bar\xi,\bar x)}{\rho}\} \\
  = \Argmin\{\varphi(\xi,x)+\lambda\sigma(\xi,x):\ (\xi,x)
  \in\ball{(\bar\xi,\bar x)}{\rho}\}.
\end{eqnarray*}
\end{proposition}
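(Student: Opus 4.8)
The plan is to establish the two inclusions between the constrained and penalized argmin sets, exploiting the residual bound in (ii) together with the Lipschitz property in (i). I would denote for brevity $\Phi = \widetilde{\Omega}\cap\graph\Equi$ and $B = \ball{(\bar\xi,\bar x)}{\rho}$, and fix $\lambda > \ell_\varphi\tau$ throughout.

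First I would prove the inclusion $\subseteq$. Let $(\xi_0,x_0)$ be a minimizer of $\varphi$ over $\Phi\cap B$. Since $\sigma$ vanishes on $\Phi\cap B$ by (ii), we have $\varphi(\xi_0,x_0)+\lambda\sigma(\xi_0,x_0)=\varphi(\xi_0,x_0)$. For an arbitrary competitor $(\xi,x)\in B$, the idea is to compare its penalized value with that of a nearby feasible point: using (ii), pick $(\xi',x')\in\Phi$ with $\|(\xi,x)-(\xi',x')\|\le\dist{(\xi,x)}{\Phi}\le\tau\sigma(\xi,x)$ (taking a projection, which exists since $\Phi$ is closed by (iii)), one estimates
\[
  \varphi(\xi_0,x_0)\le\varphi(\xi',x')\le\varphi(\xi,x)+\ell_\varphi\tau\sigma(\xi,x)<\varphi(\xi,x)+\lambda\sigma(\xi,x),
\]
where the Lipschitz bound (i) controls the $\varphi$-gap between $(\xi,x)$ and its projection. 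The one delicate point, which is the main obstacle, is that the projection $(\xi',x')$ must itself lie in $B$ so that (i) applies along the segment joining the two points; this requires checking that $\dist{(\xi,x)}{\Phi}$ stays small enough, and near $(\bar\xi,\bar x)$ — where $\sigma$ is small — this is where the argument must be made careful, possibly by shrinking $\rho$ at the outset or by restricting attention to the relevant subball. Granting this, the display shows $(\xi_0,x_0)$ is a minimizer of $\varphi+\lambda\sigma$ over $B$.

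For the reverse inclusion $\supseteq$, let $(\xi_1,x_1)$ minimize $\varphi+\lambda\sigma$ over $B$. I would first argue that $(\xi_1,x_1)\in\Phi$: if not, then $\sigma(\xi_1,x_1)>0$ by (ii) (since $\dist{(\xi_1,x_1)}{\Phi}\le\tau\sigma(\xi_1,x_1)$ forces $\sigma>0$ whenever the point is off $\Phi$), and the same projection-and-Lipschitz comparison as above produces a nearby feasible point with strictly smaller penalized value, contradicting minimality. Once $(\xi_1,x_1)\in\Phi$, the penalty term vanishes, so $(\xi_1,x_1)$ minimizes $\varphi$ over $\Phi\cap B$ by comparing against any feasible competitor (whose penalty also vanishes). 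The existence clause in (iii) guarantees both argmin sets are nonempty, so the equality is not vacuous.

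In carrying this out I would be careful that the strict inequality $\lambda>\ell_\varphi\tau$ (rather than $\ge$) is exactly what converts the weak comparison into a strict one in the $\supseteq$ direction, ruling out infeasible minimizers; and that all comparisons are confined to $B$ so that the single Lipschitz constant $\ell_\varphi$ is legitimately applicable. The routine metric manipulations I would relegate to a single displayed chain of inequalities.
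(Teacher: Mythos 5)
Your route is genuinely different from the paper's: the paper proves Proposition \ref{pro:Penpri} in one line by delegating to the exact penalization theorem \cite[Theorem 6.8.1]{FacPan03}, whereas you attempt a self-contained projection argument. That would be a welcome elementary alternative, except that the point you flag as ``the one delicate point'' is not a routine verification to be smoothed over later --- it is a genuine gap, and under the hypotheses exactly as written it cannot be closed. Write $\Phi=\widetilde{\Omega}\cap\graph\Equi$ and $B=\ball{(\bar\xi,\bar x)}{\rho}$. Hypothesis (ii) bounds $\dist{(\xi,x)}{\Phi}$, not $\dist{(\xi,x)}{\Phi\cap B}$, so the nearest point $(\xi',x')\in\Phi$ to a competitor $(\xi,x)\in B$ may fall outside $B$. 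When it does, every link of your displayed chain breaks at once: $\varphi$ is Lipschitz only on $B$, so $\varphi(\xi',x')\le\varphi(\xi,x)+\ell_\varphi\tau\sigma(\xi,x)$ is unavailable; $(\xi_0,x_0)$ minimizes $\varphi$ only over $\Phi\cap B$, so $\varphi(\xi_0,x_0)\le\varphi(\xi',x')$ is unavailable; and in your reverse inclusion the value $\sigma(\xi',x')$ is not even defined, since $\sigma$ is only given on $B$. A one-dimensional caricature shows the chain cannot be repaired by mere care: take $\Phi=\{0\}\cup\{3/2\}$, $B=[-1,1]$, $\sigma=\dist{\cdot}{\Phi}$ with $\tau=1$, and $\varphi(t)=-\tfrac{3}{5}t$, so $\ell_\varphi=\tfrac{3}{5}$. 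The constrained argmin over $\Phi\cap B$ is $\{0\}$, yet for $\lambda=1>\ell_\varphi\tau$ the minimum of $\varphi+\lambda\sigma$ over $B$ is attained only at $t=1$, because the boundary point borrows a small residual from the feasible point $3/2$ lying outside $B$. So the two argmin sets in the asserted equality can be disjoint under the literal hypotheses.

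There are two honest repairs, and both change what is being proved. Either strengthen (ii) to an error bound with respect to the intersection, $\dist{(\xi,x)}{\Phi\cap B}\le\tau\sigma(\xi,x)$ on $B$ --- this is how the residual is calibrated in the exact-penalization framework the paper invokes, and with it your projection lands in $\Phi\cap B$ and the whole argument goes through verbatim --- or keep (ii) and shrink the ball on the penalized side: when $(\bar\xi,\bar x)\in\Phi$, any $(\xi,x)\in\ball{(\bar\xi,\bar x)}{\rho/2}$ satisfies $\dist{(\xi,x)}{\Phi}\le\rho/2$, so its projection stays in $\Phi\cap B$ and one obtains the equality of argmin sets with $\ball{(\bar\xi,\bar x)}{\rho/2}$ replacing $B$ in the penalized problem. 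The second variant is all that is actually used downstream in Proposition \ref{pro:constred}, where only local minimality at a feasible point $(\bar\xi,\bar x)$ matters; but as a proof of Proposition \ref{pro:Penpri} as stated, your argument (and the suggestion to ``shrink $\rho$ at the outset'') establishes a different, weaker assertion. You should either adopt the strengthened form of (ii) or restate the conclusion locally before the projection argument can be accepted.
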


\begin{proof}
It suffices to apply, for instance, \cite[Theorem 6.8.1]{FacPan03} with
$\theta=\varphi$, $X=\ball{(\bar\xi,\bar x)}{\rho}$, $W=\widetilde{\Omega}\cap\graph\Equi$
and $S=\widetilde{\Omega}\cap\graph\Equi\cap\ball{(\bar\xi,\bar x)}{\rho}$.
\end{proof}

In order to treat the characteristic constraint $x\in\Equi(\xi)$ according
to this penalization approach, one may use the auxiliary function $\mf$
introduced in $(\ref{eq:defmf})$. The situation in which $\mf$ works as
a merit function for the constraint $\xi\in\Equi(\xi)$ is captured by
the concept of error bounds. More specifically,
a uniform error bound around $(\bar\xi,\bar x)\in\graph\Equi$ is
said to hold for a class of problems $(\VEP(\xi))$ if there exist $\delta,\,
\gamma>0$ such that
\begin{equation}     \label{in:unierboVEP}
   \dist{x}{\Equi(\xi)}\le{\mf(\xi,x)\over\gamma},\quad\forall
   (\xi,x)\in\ball{\bar\xi}{\delta}\times\ball{\bar x}{\delta}.
\end{equation}

Upon the occurrence of a uniform error bound the study of necessary
optimality conditions for $(\MPEC)$ can be reduced to those for
an unconstrained problem adequately penalized.

\begin{proposition}    \label{pro:constred}
Let $(\bar\xi,\bar x)\in\widetilde{\Omega}\cap\graph\Equi$ be a local solution
to $(\MPEC)$. Suppose that:

\begin{itemize}

\item[(i)] $\varphi$ is locally Lipschitz around $(\bar\xi,\bar x)$;

\item[(ii)] $\graph\Equi$ is closed;

\item[(iii)] $\widetilde{\Omega}$ and $\graph\Equi$ are subtransversal at
$(\bar\xi,\bar x)$;

\item[(iv)] an uniform error bound for problems $(\VEP(\xi))$
around $(\bar\xi,\bar x)$ as in $(\ref{in:unierboVEP})$ holds.

\end{itemize}
Then, there exists $\lambda>0$ such that the pair $(\bar\xi,\bar x)$
is a local unconstrained solution of the problem
\begin{equation}     \label{P:unpenal}
   \min\,\left[\varphi(\xi,x)+\lambda\left(\dist{\xi}{\Omega}+
   {\mf(\xi,x)\over\gamma}\right)\right].
\end{equation}
\end{proposition}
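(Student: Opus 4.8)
The plan is to reduce the whole statement to the basic penalization principle of Proposition \ref{pro:Penpri} by exhibiting an explicit residual function assembled from the two constraint-violation measures already at hand. Guided by the shape of the penalty in $(\ref{P:unpenal})$, I would set
$$
  \sigma(\xi,x)=\dist{\xi}{\Omega}+{\mf(\xi,x)\over\gamma},
$$
with $\gamma>0$ the constant provided by the uniform error bound (iv). The first routine task is to verify that $\sigma\ge 0$ and that it vanishes exactly on the feasible set: $\dist{\xi}{\Omega}=0$ precisely when $(\xi,x)\in\widetilde{\Omega}=\Omega\times\R^n$ (as $\Omega$ is closed), while $\mf=\nu+\mu\ge 0$ combined with the characterization $(\ref{eq:grphEquichar})$ forces $\mf(\xi,x)=0$ exactly on $\graph\Equi$. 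Hence $\sigma$ vanishes on $\widetilde{\Omega}\cap\graph\Equi$, as required by hypothesis (ii) of Proposition \ref{pro:Penpri}.

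The crucial step is the error-bound inequality $\dist{(\xi,x)}{\widetilde{\Omega}\cap\graph\Equi}\le\tau\,\sigma(\xi,x)$ near $(\bar\xi,\bar x)$, which I would obtain by feeding (iv) into (iii). Subtransversality of $\widetilde{\Omega}$ and $\graph\Equi$, read through its metric reformulation, supplies $\kappa,r>0$ with
$$
  \dist{(\xi,x)}{\widetilde{\Omega}\cap\graph\Equi}\le
  \kappa\max\{\dist{(\xi,x)}{\widetilde{\Omega}},\,\dist{(\xi,x)}{\graph\Equi}\}
$$
on $\ball{(\bar\xi,\bar x)}{r}$. Since $\widetilde{\Omega}=\Omega\times\R^n$, the first distance inside the maximum is simply $\dist{\xi}{\Omega}$, which is already a summand of $\sigma$.

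The second distance is where I expect the main obstacle, because the uniform error bound (iv) controls only the \emph{vertical} quantity $\dist{x}{\Equi(\xi)}$ in $\R^n$, whereas subtransversality delivers the full product-space distance $\dist{(\xi,x)}{\graph\Equi}$. The bridge is the elementary remark that for every $x'\in\Equi(\xi)$ the point $(\xi,x')$ lies in $\graph\Equi$, so $\dist{(\xi,x)}{\graph\Equi}\le\|x-x'\|$, and taking the infimum over such $x'$ gives $\dist{(\xi,x)}{\graph\Equi}\le\dist{x}{\Equi(\xi)}$. Inserting (iv) then yields $\dist{(\xi,x)}{\graph\Equi}\le\mf(\xi,x)/\gamma$ on a neighbourhood of $(\bar\xi,\bar x)$ (the convention $\dist{x}{\varnothing}=+\infty$ makes this vacuously true wherever $\Equi(\xi)=\varnothing$). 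Combining the three estimates produces $\dist{(\xi,x)}{\widetilde{\Omega}\cap\graph\Equi}\le\kappa\max\{\dist{\xi}{\Omega},\,\mf(\xi,x)/\gamma\}\le\kappa\,\sigma(\xi,x)$, so hypothesis (ii) of Proposition \ref{pro:Penpri} holds with $\tau=\kappa$.

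Finally I would fix $\rho>0$ smaller than the radius of local Lipschitz continuity of $\varphi$ from (i), than $r$, and than the error-bound radius $\delta$ of (iv), and small enough that $(\bar\xi,\bar x)$ minimizes $\varphi$ over $\widetilde{\Omega}\cap\graph\Equi\cap\ball{(\bar\xi,\bar x)}{\rho}$; closedness of $\widetilde{\Omega}$ and of $\graph\Equi$ (from (ii)) makes this restricted feasible set compact, so the localized problem does admit a solution, securing the remaining clause of Proposition \ref{pro:Penpri}(iii). Invoking that proposition for any $\lambda>\ell_\varphi\kappa$ identifies the two $\Argmin$ sets; since $(\bar\xi,\bar x)$ belongs to the constrained one, it is a local unconstrained minimizer of $\varphi+\lambda\sigma$, which is exactly the objective in $(\ref{P:unpenal})$, completing the proof.
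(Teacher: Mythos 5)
Your proposal is correct and follows essentially the same route as the paper: both rest on Proposition \ref{pro:Penpri}, the metric reformulation of subtransversality, the identities $\dist{(\xi,x)}{\widetilde{\Omega}}=\dist{\xi}{\Omega}$ and $\dist{(\xi,x)}{\graph\Equi}\le\dist{x}{\Equi(\xi)}$, and the uniform error bound; the only difference is that you assemble the residual function $\sigma=\dist{\cdot}{\Omega}+\mf/\gamma$ up front and invoke the penalization principle once, whereas the paper invokes it with $\sigma=\dist{\cdot}{\widetilde{\Omega}\cap\graph\Equi}$, $\tau=1$, and then successively majorizes the penalty term. The one point worth tightening is that your $\sigma$ may take the value $+\infty$ (since $\nu$, hence $\mf$, is extended-real-valued), which formally violates the finiteness required of $\sigma$ in Proposition \ref{pro:Penpri}(ii); the paper sidesteps this by applying the principle only to the genuine distance function and then using the elementary fact that enlarging a penalty vanishing at the minimizer preserves local minimality.
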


\begin{proof}
Since $(\bar\xi,\bar x)$ is a local solution to $(\MPEC)$, there exists
$\rho>0$ such that $(\bar\xi,\bar x)\in\Argmin\{\varphi(\xi,x):\ (\xi,x)
\in \widetilde{\Omega}\cap\graph\Equi\cap \ball{(\bar\xi,\bar x)}{\rho}\}$.
By hypothesis (i), up to a reduction in value of $\rho$, one can assume
that $\varphi$ is Lipschitz continuous on $\ball{(\bar\xi,\bar x)}{\rho}$,
with constant $\ell_\varphi>0$. Thus, it is possible to apply Proposition
\ref{pro:Penpri}, with
$$
  \sigma(\xi,x)=\dist{(\xi,x)}{\widetilde{\Omega}\cap\graph\Equi} \quad
  \hbox{ and }\quad  \tau=1.
$$
According to it, for any $\ell>\ell_\varphi$ it is true that
$(\bar\xi,\bar x)$ locally solves the unconstrained problem
\begin{equation}      \label{p:unconminpen}
   \min\ [\varphi(\xi,x)+\ell\dist{(\xi,x)}{\widetilde{\Omega}\cap\graph\Equi}].
\end{equation}
Since $\widetilde{\Omega}$ and $\graph\Equi$ are subtransversal at $(\bar\xi,\bar x)$
(hypothesis (iii)), there exist $\kappa>0$ and $r>0$ such that
\begin{eqnarray*}
  \dist{(\xi,x)}{\widetilde{\Omega}\cap\graph\Equi} &\le &\kappa
  [\dist{(\xi,x)}{\widetilde{\Omega}}+\dist{(\xi,x)}{\graph\Equi}], \\
  & & \forall (\xi,x)\in\ball{\bar\xi}{r}\times
  \ball{\bar x}{r}.
\end{eqnarray*}
Consequently, $(\bar\xi,\bar x)$ turns out to be a local solution
to the following unconstrained problem
$$
  \min\left[\varphi(\xi,x)+\ell\kappa\left(
  \dist{(\xi,x)}{\widetilde{\Omega}}+\dist{(\xi,x)}{\graph\Equi}
  \right)\right].
$$
Now, it is readily seen that
$$
  \dist{(\xi,x)}{\widetilde{\Omega}}=\dist{\xi}{\Omega},\quad\forall
  (\xi,x)\in\R^p\times\R^n.
$$
Besides, observe that, taken an arbitrary $(\xi,x)\in\R^p\times\R^n$,
if $\xi\in\dom\Equi$, then
\begin{eqnarray*}
   \dist{(\xi,x)}{\graph\Equi} &=& \inf\{\|(\hat\xi,\hat x)
   -(\xi,x)\|: \hat x\in\Equi(\hat\xi),\ \hat\xi\in\dom\Equi\} \\
   &\le& \inf\{\|(\xi,\tilde x)-(\xi,x)\|: \tilde x\in\Equi(\xi)\} \\
   &=& \inf\{\|\tilde x-x)\|: \tilde x\in\Equi(\xi)\}=\dist{x}{\Equi(\xi)}.
\end{eqnarray*}
If $\xi\not\in\dom\Equi$, since it is $(\bar\xi,\bar x)\in\graph\Equi
\ne\varnothing$, whereas $\Equi(\xi)=\varnothing$, one trivially
has
$$
   \dist{(\xi,x)}{\graph\Equi}\le +\infty=\dist{x}{\varnothing}=
   \dist{x}{\Equi(\xi)}.
$$
Such estimates allow one to deduce that $(\bar\xi,\bar x)$ is
also a local unconstrained minimizer on the problem
$$
  \min\left[\varphi(\xi,x)+\ell\kappa\left(
  \dist{\xi}{\Omega}+\dist{x}{\Equi(\xi)}\right)\right].
$$
By taking into account the validity of the error bound in $(\ref{in:unierboVEP})$,
this fact proves the assertion in the thesis, with $\lambda=
\ell\kappa$.
\end{proof}

In consideration of the dramatic role played by error bounds in the
current approach, the next lemma provides a sufficient subdifferential condition
for error bounds related to a family of problems $(\VEP(\xi))$. It can be
regarded as a parameterized version of \cite[Theorem 3.14]{Uder22}.
Its proof, which follows the argument exploited for the aforementioned
result, is given for the sake of completeness.
In its formulation, given $(\xi,x)\in\R^p\times\R^n$ it is convenient to
employ the following notation for indicating the unit truncation map
${\rm N}^\flat(\cdot,\Kmap(\cdot)):\R^n\times\R^p\rightrightarrows
\R^n$ of basic normal cones:
\begin{eqnarray}    \label{eq:deftruncmap}
   \Ucone{x}{\Kmap(\xi)}=\left\{\begin{array}{ll}
    \Ncone{x}{\Kmap(\xi)}\cap\Uball, & \hbox{ if } x\in\Kmap(\xi), \\
    \\
    \displaystyle{x-\Proj{x}{\Kmap(\xi)}\over\dist{x}{\Kmap(\xi)}},
     & \hbox{ if } x\not\in\Kmap(\xi).
   \end{array}\right.
\end{eqnarray}
Recall that a subset $S\subseteq\R^m$ is called $C$-bounded provided that
its subset $S\backslash C$ is (metrically) bounded.

\begin{lemma}[Parametric error bound]    \label{lem:erbo1}
Let $(\bar\xi,\bar x)\in\graph\Equi$ and $\rho>0$.
With reference to problem $(\VEP(\xi))$, where $\xi\in\ball{\bar\xi}{\rho}$,
suppose that:

\begin{itemize}

\item[(i)] function $x\mapsto f(\xi,x,z)$ is $C$-u.s.c., for every $z\in\Kmap(\xi)$;

\item[(ii)] there exists $x_{0,\xi}\in\Kmap(\xi)$ such that
$f(\xi,x_{0,\xi},\Kmap(\xi))$ is $C$-bounded;

\item[(iii)] there exists $\gamma>0$ such that
$$
  \left[\partial_x\nu(\xi,x)+\Ucone{x}{\Kmap(\xi)}\right]\cap
  \gamma\Uball=\varnothing,\quad\forall x\in\R^n
  \backslash\Equi(\xi).
$$
\end{itemize}
Then, $\Equi(\xi)\ne\varnothing$ and it holds
\begin{equation}    \label{in:erbo1}
   \dist{x}{\Equi(\xi)}\le{\mf(\xi,x)\over \gamma},
   \quad\forall x\in\R^n.
\end{equation}
\end{lemma}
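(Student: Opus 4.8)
The plan is to fix $\xi\in\ball{\bar\xi}{\rho}$ and reduce the claim to a one-variable error bound for the merit function $\psi:=\mf(\xi,\cdot):\R^n\longrightarrow [0,+\infty]$, whose zero-sublevel set is exactly $\Equi(\xi)$ by $(\ref{eq:grphEquichar})$. First I would record the structural properties of $\psi$. Hypothesis (i) makes each function $x\mapsto\dist{f(\xi,x,z)}{C}$ l.s.c. (by the same ingredient used in the proof of Lemma \ref{lem:lscni}), whence $\nu(\xi,\cdot)$, being a pointwise supremum of such functions, is l.s.c., while $\mu(\xi,\cdot)=\dist{\cdot}{\Kmap(\xi)}$ is $1$-Lipschitz; thus $\psi$ is l.s.c. and nonnegative. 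Hypothesis (ii) guarantees properness: at $x_{0,\xi}\in\Kmap(\xi)$ the $C$-boundedness of $f(\xi,x_{0,\xi},\Kmap(\xi))$ means its part lying outside $C$ is bounded, so $\dist{\cdot}{C}$ stays bounded there and vanishes on the rest, giving $\nu(\xi,x_{0,\xi})<+\infty$; since $\mu(\xi,x_{0,\xi})=0$, one gets $\psi(x_{0,\xi})<+\infty$.

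The next step is to translate the slope assumption (iii) into an estimate on $\partial\psi$. Since $\mu(\xi,\cdot)$ is Lipschitz and $\nu(\xi,\cdot)$ is l.s.c., the pair is semi-Lipschitzian, so $(\ref{eq:singsubdqc})$ holds automatically and the sum rule yields
$$
  \partial\psi(x)\subseteq\partialx\nu(\xi,x)+\partialx\mu(\xi,x).
$$
Comparing $(\ref{eq:bsubddist})$ and $(\ref{eq:bsubdoosdist})$ with the definition $(\ref{eq:deftruncmap})$, in both cases $x\in\Kmap(\xi)$ and $x\not\in\Kmap(\xi)$ it holds $\partialx\mu(\xi,x)=\Ucone{x}{\Kmap(\xi)}$. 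Therefore $\partial\psi(x)\subseteq\partialx\nu(\xi,x)+\Ucone{x}{\Kmap(\xi)}$, and (iii) gives, recalling that $[\psi>0]=\R^n\backslash\Equi(\xi)$,
$$
  \partial\psi(x)\cap\gamma\Uball=\varnothing,\qquad\forall x\in[\psi>0].
$$

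With these two facts in hand I would close the argument by an Ekeland-type descent, which is the technical core of the proof. Fix $x\in\R^n$; if $\psi(x)=+\infty$ or $\psi(x)\le 0$ the bound $(\ref{in:erbo1})$ is trivial, so assume $0<\psi(x)<+\infty$. For an arbitrary $\lambda\in(0,\gamma)$ apply Ekeland's variational principle to the proper, l.s.c., bounded-below function $\psi$ (with tolerance $\psi(x)$ and radius $\psi(x)/\lambda$) to obtain $\hat x$ with $\psi(\hat x)\le\psi(x)$, $\|\hat x-x\|\le\psi(x)/\lambda$, and such that $\hat x$ minimizes $y\mapsto\psi(y)+\lambda\|y-\hat x\|$. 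The Fermat rule $(\ref{in:buncoptcond})$, together with the sum rule and $\partial\|\cdot-\hat x\|(\hat x)=\Uball$, gives $\nullv\in\partial\psi(\hat x)+\lambda\Uball$, i.e. $\partial\psi(\hat x)\cap\lambda\Uball\ne\varnothing$; since $\lambda<\gamma$, this contradicts the displayed consequence of (iii) unless $\hat x\in\Equi(\xi)$. Hence $\Equi(\xi)\ne\varnothing$ and $\dist{x}{\Equi(\xi)}\le\|\hat x-x\|\le\psi(x)/\lambda=\mf(\xi,x)/\lambda$. Letting $\lambda\uparrow\gamma$ delivers $(\ref{in:erbo1})$.

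The main obstacle is this last step: converting the pointwise, qualitative information ``every subgradient outside $\Equi(\xi)$ has norm exceeding $\gamma$'' into a global, quantitative distance estimate. Ekeland's principle is precisely the device that turns the subdifferential lower bound $\gamma$ into a genuine descent reaching the zero-sublevel set, and it is here that all three hypotheses must cooperate: lower semicontinuity from (i) to run Ekeland, properness from (ii) to have a finite starting value, and the slope bound (iii) to forbid the Ekeland point from remaining in $[\psi>0]$. The one point requiring care is checking that the sum-rule estimate for $\partial\psi$ is legitimate at the Ekeland point $\hat x$ (again a semi-Lipschitzian pairing of $\nu(\xi,\cdot)$ and the distance term), so that the contradiction with (iii) is drawn correctly.
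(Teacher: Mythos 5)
Your proof is correct, and its first half coincides with the paper's: both arguments fix $\xi$, establish that $\mf(\xi,\cdot)$ is l.s.c.\ (from (i)) and proper (from (ii)), and use the semi-Lipschitzian sum rule together with formulae $(\ref{eq:bsubddist})$--$(\ref{eq:bsubdoosdist})$ to get $\partial_x\mf(\xi,x)\subseteq\partial_x\nu(\xi,x)+\Ucone{x}{\Kmap(\xi)}$, so that (iii) forces every basic subgradient of $\mf(\xi,\cdot)$ outside $\Equi(\xi)$ to have norm at least $\gamma$. Where you diverge is in how this subdifferential information is converted into the distance estimate: the paper routes through the strong slope, first invoking Ioffe's estimate $\stsl{\mf(\xi,\cdot)}(x)\ge\dist{\nullv}{\Fsubd_x\mf(\xi,x)}$, then the limiting representation $\Fsubd_x\mf(\xi,x)\subseteq\partial_x\mf(\xi,x)$ to get $\stsl{\mf(\xi,\cdot)}(x)\ge\gamma$ on $[\mf(\xi,\cdot)>0]$, and finally cites a slope-based error bound criterion for l.s.c.\ functions on complete metric spaces, which yields both the nonemptiness of $\Equi(\xi)$ and $(\ref{in:erbo1})$ at once. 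You instead inline that criterion: an application of Ekeland's variational principle with tolerance $\mf(\xi,x)$ and radius $\mf(\xi,x)/\lambda$, $\lambda\in(0,\gamma)$, produces a point $\hat x$ at which the Fermat rule and the sum rule give $\partial_x\mf(\xi,\hat x)\cap\lambda\Uball\ne\varnothing$, incompatible with (iii) unless $\hat x\in\Equi(\xi)$; letting $\lambda\uparrow\gamma$ recovers $(\ref{in:erbo1})$. The two routes are mathematically equivalent (the cited criterion is itself proved by Ekeland's principle); yours has the merit of being self-contained and of never mentioning the strong slope or the Fr\'echet subdifferential, while the paper's keeps the proof short by leaning on known results and makes the slope bound explicit, which is the form reused in the convex setting of Lemma \ref{lem:perbosmconc}. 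All the delicate points in your version (properness at $x_{0,\xi}$, applicability of the sum rule at the Ekeland point, symmetry of $\Uball$ in passing from $\nullv\in\partial\psi(\hat x)+\lambda\Uball$ to $\partial\psi(\hat x)\cap\lambda\Uball\ne\varnothing$) are handled correctly.
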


\begin{proof}
Fix $\xi\in\ball{\bar\xi}{\rho}$.
Observe first that function $x\mapsto\nu(\xi,x)$ is l.s.c.
on $\R^n$ owing to hypothesis (i). This follows from Lemma \ref{lem:lscni},
because $\Kmap$, taking the constant value $\Kmap(\xi)$
($\xi$ being fixed), is a l.s.c. set-valued mapping
(otherwise,  see \cite[Remark 2.2]{Uder22}).
Moreover function $x\mapsto\mu(\xi,x)$ is Lipschitz continuous on
$\R^n$. Therefore, function $x\mapsto\mf(\xi,x)$ is l.s.c.
on $\R^n$, so the set $[\mf(\xi,\cdot)>0]$
turns out to be open.
Hypothesis (ii) implies that the set $[\mf(\xi,\cdot)<+\infty]$
is nonempty. Thus, according to \cite[Proposition 1, Chapter 3]{Ioff00}
such circumstances ensure the validity of the estimate
\begin{equation}     \label{in:stslFsubest}
  \inf_{x\in[\mf(\xi,\cdot)>0]}\stsl{\mf(\xi,\cdot)}(x)\ge
 \inf_{x\in[\mf(\xi,\cdot)>0]}\dist{\nullv}{\Fsubd_x\mf(\xi,x)},
\end{equation}
where
$$
 \stsl{\mf(\xi,\cdot)}(x_0)=\left\{\begin{array}{ll}
  0, & \hbox{ if $x_0$ is a local minimizer of } \mf(\xi,\cdot)  \\
  \displaystyle\limsup_{u\to x}
  {\mf(\xi,x_0)-\mf(\xi,x)\over\|x_0-x\|}, & \hbox{ otherwise,}
  \end{array}\right.
$$
is the strong slope of $\mf(\xi,\cdot)$ at $x_0$ and
$$
  \Fsubd_x\mf(\xi,x_0)=\left\{v\in\R^n:\ \liminf_{x\to x_0}
  {\mf(\xi,x)-\mf(\xi,x_0)-\langle v,x-x_0\rangle\over\|x-x_0\|}
  \ge 0\right\}
$$
denotes the partial Fr\'echet subdifferential with respect to $x$
of $\mf$ at $(\xi,x_0)$. As for any $x_0\in\R^n$ the limiting representation
$$
  \begin{array}{ccc}
  \partial_x\mf(\xi,x_0)= & \Uplim & \Fsubd_x\mf(\xi,x), \\
  & \hbox{\scriptsize $x\stackrel{\mf(\xi,\cdot)}{\longrightarrow} x_0$} &
  \end{array}
$$
holds (see \cite[Theorem 1.28]{Mord18},
one has $\Fsubd_x\mf(\xi,x_0)\subseteq\partial_x\mf(\xi,x_0)$. From inclusion
$(\ref{in:stslFsubest})$ it follows
$$
  \inf_{x\in[\mf(\xi,\cdot)>0]}\stsl{\mf(\xi,\cdot)}(x)\ge
  \inf_{x\in[\mf(\xi,\cdot)>0]}\dist{\nullv}{\partial_x\mf(\xi,x)}.
$$
Consequently,
by taking into account that the functions $\nu(\xi,\cdot)$ and $\mu(\xi,\cdot)$
forms a semi-Lipschitzian pair around $\bar x$, by virtue of the
sum rule and formulae $(\ref{eq:bsubddist})$ and $(\ref{eq:bsubdoosdist})$,
one obtains
$$
  \partial_x\mf(\xi,x)\subseteq\partial_x\nu(\xi,x)+
  \partial_x\mu(\xi,x)\subseteq\partial_x\nu(\xi,x)+\Ucone{x}{\Kmap(\xi)}.
$$
Thus, on account of hypothesis (iii), one obtains that
$$
   \inf_{x\in[\mf(\xi,\cdot)>0]}\stsl{\mf(\xi,\cdot)}(x)\ge\gamma,\quad
   \forall x\in\R^n\backslash\Equi(\xi).
$$
The last inequality, along with the lower semicontinuity of $\mf(\xi,\cdot)$
and the fact that $[\mf(\xi,\cdot)<+\infty]\ne\varnothing$,
allows one to invoke a well-known general error bound condition
valid in complete metric spaces (see, for instance \cite[Proposition 3.1]{Uder22})
which enables to achieve both the assertions in the thesis.
\end{proof}

\begin{remark}[Solution stability of strong vector equilibrium problems]
Lemma \ref{lem:erbo1} can be read as a solution stability result for a
parameterized family of vector equilibrium problems. The first assertion
indeed speaks about the local solvability (in the strong sense)
of problems $(\VEP(\xi))$, for $\xi$
varying around $\bar\xi$, saying that $\bar\xi\in\inte\dom\Equi$. Moreover,
by taking $x=\bar x$ in $(\ref{in:erbo1})$, one obtains
\begin{equation}   \label{in:lscEqui}
  \ball{\bar x}{{\mf(\xi,\bar x)\over\gamma}}\cap\Equi(\xi)\ne\varnothing,
  \quad\forall\xi\in\ball{\bar\xi}{\rho}.
\end{equation}
This leads to a form of quantitative lower semicontinuity of $\Equi$
at $(\bar\xi,\bar x)$. In particular, whenever the function $\xi\mapsto
\mf(\xi,\bar x)$ happens to be calm from above at $\bar\xi$, i.e.
there exists $\beta_\mf>0$ such that
$$
  \mf(\xi,\bar x)\le\beta_\mf\|\xi-\bar\xi\|, \quad\forall\xi\in
  \ball{\bar\xi}{r},
$$
for some $r>0$, then the inequality $(\ref{in:lscEqui})$ implies that
the set-valued mapping $\Equi$ is Lipschitz l.s.c. at $(\bar\xi,\bar x)$
in the sense of \cite[Chapter 1.5]{KlaKum02}.
Besides, whenever each function $\xi\mapsto\mf(\xi,x)$ is locally Lipschitz
around $\bar\xi$, with the same constant $\ell_\mf$ for every $x\in
\ball{\bar x}{\delta}$, then the inequality $(\ref{in:erbo1})$ yields
\begin{eqnarray*}
   \sup_{x\in\Equi(\xi_2)\cap\ball{\bar x}{\delta}}\dist{x}{\Equi(\xi_1)}
   &\le& \sup_{x\in\Equi(\xi_2)\cap\ball{\bar x}{\delta}}\gamma^{-1}
   [\mf(\xi_1,x)-\mf(\xi_2,x)+\mf(\xi_2,x)] \\
   &\le& \gamma^{-1}\ell_\mf\|\xi_1-\xi_2\|+\gamma^{-1}\sup_{x\in\Equi(\xi_2)\cap\ball{\bar x}{\delta}}
   \mf(\xi_2,x) \\
   &=& \gamma^{-1}\ell_\mf\|\xi_1-\xi_2\|, \quad\forall \xi\in\ball{\bar\xi}{r},
\end{eqnarray*}
for some $r>0$. Such an inequality means that $\Equi$ has the Aubin
property (equivalently, it is Lipschitz-like) around $(\bar\xi,\bar x)$
(see \cite{Ioff00},\cite[Chapter 1.2.2]{Mord06a},\cite[Chapter 9.F]{RocWet98}).
In turn, calmness and Lipschitz continuity properties of $\mf$
can be obtained by proper assumptions on $\Kmap$ and $f$.
\end{remark}

On the base of the previous preparatory results, one is in a position
to establish a first necessary optimality condition for $(\MPEC)$.

\begin{theorem}[General necessary optimality condition]   \label{thm:nocond1}
Let $(\bar\xi,\bar x)\in\graph\Equi$, with $\bar\xi\in\Omega$ be a local solution
to $(\MPEC)$, and let $\rho>0$. Suppose that:

\begin{itemize}

\item[(i)] $\varphi$ is locally Lipschitz around $(\bar\xi,\bar x)$;

\item[(ii)] $\Kmap$ is continuous;

\item[(iii)] $\widetilde{\Omega}$ and $\graph\Equi$ are subtransversal at
$(\bar\xi,\bar x)$;

\item[(iv)] $f:\R^p\times\R^n\times\R^n\longrightarrow\R^m$
is $C$-u.s.c.;

\item[(v)] the qualification condition $-\partial^\infty
\nu(\bar\xi,\bar x)\cap\left(\Coder{\Kmap}{\bar\xi}{\bar x}
(\Uball)\times\Uball\right)=\{\nullv\}$ holds;

\item[(vi)] for every $\xi\in\ball{\bar\xi}{\rho}$ there exists
$x_{0,\xi}\in\Kmap(\xi)$ such that $f(\xi,x_{0,\xi},\Kmap(\xi))$
is $C$-bounded;

\item[(vii)] there exists $\gamma>0$ such that
\begin{equation}      \label{eq:erbosubdgammacond}
  \left[\partialx\nu(\xi,x)+\Ucone{x}{\Kmap(\xi)}\right]\cap
  \gamma\Uball=\varnothing,\quad\forall x\in\R^n\backslash
  \Equi(\xi),\ \forall\xi\in\ball{\bar\xi}{\rho}.
\end{equation}
\end{itemize}
Then, there exists $\lambda>0$ such that
\begin{equation}    \label{in:noptcond1}
  \nullv\in\partial\varphi(\bar\xi,\bar x)+
  \lambda\left[\left(\Ncone{\bar\xi}{\Omega}\cap\Uball\right)
  \times\{\nullv\}\right]+{\lambda\over\gamma}\left[\partial\nu(\bar\xi,\bar x)+
  (\Coder{\Kmap}{\bar\xi}{\bar x}(\Uball)\times\Uball)\right].
\end{equation}
\end{theorem}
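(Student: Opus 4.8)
The plan is to reduce $(\MPEC)$ to an unconstrained penalized problem and then apply the Fermat rule together with the subdifferential calculus recalled in Section~\ref{Sect:2}. First I would verify that all the hypotheses of Proposition~\ref{pro:constred} hold at $(\bar\xi,\bar x)$. Hypothesis (i) there is the present (i); the closedness of $\graph\Equi$ follows from the corollary on the closure of $\graph\Equi$, since $\Kmap$ is continuous by (ii) and $f$ is $C$-u.s.c. by (iv); subtransversality is the present (iii). The only substantive check is the uniform error bound $(\ref{in:unierboVEP})$, which I would obtain by applying Lemma~\ref{lem:erbo1} for each fixed $\xi\in\ball{\bar\xi}{\rho}$: its hypotheses (i)--(iii) are guaranteed respectively by the present (iv) (restricted to the variable $x$), (vi), and (vii). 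Crucially, (vii) supplies one and the same $\gamma$ for all $\xi\in\ball{\bar\xi}{\rho}$, so the per-$\xi$ estimate $\dist{x}{\Equi(\xi)}\le\mf(\xi,x)/\gamma$ assembles into the uniform bound with $\delta=\rho$. Proposition~\ref{pro:constred} then produces $\lambda>0$ for which $(\bar\xi,\bar x)$ is a local unconstrained minimizer of the function $\Phi(\xi,x)=\varphi(\xi,x)+\lambda(\dist{\xi}{\Omega}+\mf(\xi,x)/\gamma)$ of $(\ref{P:unpenal})$.

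Next, the Fermat-type rule $(\ref{in:buncoptcond})$ gives $\nullv\in\partial\Phi(\bar\xi,\bar x)$, and the whole task becomes one of decomposing this subgradient. I would peel off the summands by the basic sum rule: since $\varphi$ and $(\xi,x)\mapsto\dist{\xi}{\Omega}$ are locally Lipschitz, their singular subdifferentials reduce to $\{\nullv\}$, so they split off with no qualification cost, yielding $\partial\Phi(\bar\xi,\bar x)\subseteq\partial\varphi(\bar\xi,\bar x)+\lambda\,\partial(\dist{\cdot}{\Omega})(\bar\xi,\bar x)+(\lambda/\gamma)\,\partial\mf(\bar\xi,\bar x)$. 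For the middle term, because $\dist{\xi}{\Omega}$ is independent of $x$ and $\bar\xi\in\Omega$, formula $(\ref{eq:bsubddist})$ gives $\partial(\dist{\cdot}{\Omega})(\bar\xi,\bar x)=(\Ncone{\bar\xi}{\Omega}\cap\Uball)\times\{\nullv\}$, which is exactly the second term of $(\ref{in:noptcond1})$. For the last term I would write $\mf=\nu+\mu$ and apply the sum rule once more, $\partial\mf(\bar\xi,\bar x)\subseteq\partial\nu(\bar\xi,\bar x)+\partial\mu(\bar\xi,\bar x)$, and then estimate $\partial\mu$ by Lemma~\ref{lem:bsubdmi}. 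Since $(\bar\xi,\bar x)\in\graph\Equi$ forces $\bar x\in\Kmap(\bar\xi)$, the projection $\Proj{\bar x}{\Kmap(\bar\xi)}$ reduces to the singleton $\{\bar x\}$ and $(\ref{in:bsubdifmi})$ collapses to $\partial\mu(\bar\xi,\bar x)\subseteq\Coder{\Kmap}{\bar\xi}{\bar x}(\Uball)\times\Uball$. Substituting these estimates and keeping $\partial\nu(\bar\xi,\bar x)$ unevaluated yields precisely inclusion $(\ref{in:noptcond1})$.

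The step requiring the most care---and the place where hypothesis (v) is spent---is the qualification condition for the sum rule on $\mf=\nu+\mu$. Here $\nu$ is merely l.s.c. (Lemma~\ref{lem:lscni}) while $\mu$ need not be locally Lipschitz, since only continuity of $\Kmap$, not its Lipschitzian behaviour, is assumed; hence the sum rule demands $\partial^\infty\nu(\bar\xi,\bar x)\cap(-\partial^\infty\mu(\bar\xi,\bar x))=\{\nullv\}$. I would discharge this by noting that the singular counterpart of the marginal-function argument used for Lemma~\ref{lem:bsubdmi} gives, again via $\Proj{\bar x}{\Kmap(\bar\xi)}=\{\bar x\}$, the estimate $\partial^\infty\mu(\bar\xi,\bar x)\subseteq\Coder{\Kmap}{\bar\xi}{\bar x}(\nullv)\times\{\nullv\}\subseteq\Coder{\Kmap}{\bar\xi}{\bar x}(\Uball)\times\Uball$. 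Then any $s\in\partial^\infty\nu(\bar\xi,\bar x)\cap(-\partial^\infty\mu(\bar\xi,\bar x))$ satisfies $-s\in(-\partial^\infty\nu(\bar\xi,\bar x))\cap(\Coder{\Kmap}{\bar\xi}{\bar x}(\Uball)\times\Uball)$, which by (v) forces $-s=\nullv$, i.e. $s=\nullv$. This is the one genuinely nontrivial verification; the remaining sum-rule qualifications are automatic because every other summand is locally Lipschitz, and assembling the displayed inclusions completes the argument.
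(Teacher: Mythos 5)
Your proposal is correct and follows essentially the same route as the paper: error bound via Lemma \ref{lem:erbo1}, penalization via Proposition \ref{pro:constred}, then the Fermat rule and the basic sum rule with $\partial\mu$ estimated by Lemma \ref{lem:bsubdmi} after noting $\Proj{\bar x}{\Kmap(\bar\xi)}=\{\bar x\}$. Your discharge of the sum-rule qualification for $\nu+\mu$ via the singular-subdifferential estimate $\partial^\infty\mu(\bar\xi,\bar x)\subseteq\Coder{\Kmap}{\bar\xi}{\bar x}(\nullv)\times\{\nullv\}$ is in fact slightly more careful than the paper's argument, which verifies the condition with $\partial\mu$ in place of $\partial^\infty\mu$.
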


\begin{proof}
Upon hypotheses (iv), (vi) and (vii), on account of Lemma \ref{lem:erbo1}
it is possible to claim that an uniform error bound around $(\bar\xi,\bar x)$
holds true. This fact, along with hypotheses (i), (ii) and (iii), enables one
apply Proposition \ref{pro:constred}. Then, according to the necessary optimality
condition expressed by $(\ref{in:buncoptcond})$, as a local unconstrained minimizer
of $\varphi+\lambda\dist{\cdot}{\Omega}+\lambda\gamma^{-1}\mf$, $(\bar\xi,\bar x)$
must satisfy the condition
\begin{equation}    \label{in:0inbsudcond}
  \nullv\in\partial\left(\varphi+\lambda\dist{\cdot}{\Omega}
  +{\lambda\over\gamma}\mf\right)(\bar\xi,\bar x).
\end{equation}
Notice that, in the current setting, functions $\varphi+\lambda
\dist{\cdot}{\Omega}$ and ${\lambda\over\gamma}\mf$ are a semi-Lipschitzian
pairs around $(\bar\xi,\bar x)$, while $\varphi$ and $\lambda\dist{\cdot}{\Omega}$
as locally Lipschitz functions clearly fulfil the qualification condition
$(\ref{eq:singsubdqc})$. This allows one
to apply the sume rule for basic subdifferential. Thus from inclusion
$(\ref{in:0inbsudcond})$ and the first relation in $(\ref{eq:bsubddist})$, one obtains
$$
  \nullv\in\partial\varphi(\bar\xi,\bar x)+\lambda\left[\left(\Ncone{\bar\xi}{\Omega}
  \cap\Uball\right)\times\{\nullv\}\right]+
  {\lambda\over\gamma}\partial\mf(\bar\xi,\bar x).
$$
Now, by virtue of Lemma \ref{lem:bsubdmi}, as in this case it is
$\Proj{\bar x}{\Kmap(\bar\xi)}=\{\bar x\}$, from $(\ref{in:bsubdifmi})$
one gets the simpler outer estimate
$$
  \partial\mu(\bar\xi,\bar x)\subseteq\Coder{\Kmap}{\bar\xi}{\bar x}
  (\Uball)\times\Uball.
$$
As a consequence, the qualification condition in hypothesis (v)
implies
$$
  \partial^\infty\nu(\bar\xi,\bar x)\cap\left(
  -\partial\mu(\bar\xi,\bar x)\right)=\{\nullv\},
$$
which allows one to write
$$
  \partial\mf(\bar\xi,\bar x)\subseteq\partial\nu(\bar\xi,\bar x)
  +\partial\mu(\bar\xi,\bar x)\subseteq\partial\nu(\bar\xi,\bar x)
  +(\Coder{\Kmap}{\bar\xi}{\bar x}(\Uball)\times\Uball).
$$
The last inclusions lead obviously to the condition in
the thesis, thereby completing the proof.
\end{proof}

As a comment to the optimality condition emerging from Theorem \ref{thm:nocond1},
one may say that it takes the typical form of a stationarity condition
involving subgradients of the objective functions and (through
normal cones and coderivatives) of data defining the inner equilibrium problem
(compare e.g. with \cite[Theorem 5.49]{Mord06b}, where nonetheless
equilibrium constraints are formalized by parameterized generalized equations
different from $(\VEP(\xi))$).

It is worth noticing that condition $(\ref{in:noptcond1})$ is expressed
in terms of initial problem data, except for the appearance of
function $\nu$, which can be calculated using problem data.

\begin{remark}   \label{rem:transvsufc}
At the price of a minor generality, the geometric assumption (iii) can be
replaced with the following qualification condition, which is expressed
in terms of nonsmooth analysis construction
$$
  \Ncone{\bar\xi}{\Omega}\cap[-\Coder{\Equi}{\bar\xi}{\bar x}(\nullv)]
  =\{\nullv\}.   \leqno {\rm (iii')}
$$
Indeed, as it has been remarked in Section \ref{Sect:2}, according
to $(\ref{eq:subtransvnconecond})$ the subtransversality of $\widetilde{\Omega}$
and $\graph\Equi$ at $(\bar\xi,\bar x)$ is ensured by the condition
$$
  \Ncone{(\bar\xi,\bar x)}{\widetilde{\Omega}}\cap
  [-\Ncone{(\bar\xi,\bar x)}{\graph\Equi}]=\{\nullv\}.
$$
Since it is $\Ncone{(\bar\xi,\bar x)}{\widetilde{\Omega}}=\Ncone{\bar\xi}{\Omega}
\times\Ncone{\bar x}{\R^n}=\Ncone{\bar\xi}{\Omega}\times\{\nullv\}$, and by
definition of coderivative, if $(u,v)\in -\Ncone{(\bar\xi,\bar x)}{\graph\Equi}$
then $-u\in\Coder{\Equi}{\bar\xi}{\bar x}(v)$, the above condition turns
out to be valid provided that, whenever $(u,v)\in\Ncone{\bar\xi}{\Omega}\times\{\nullv\}$
and $-u\in\Coder{\Equi}{\bar\xi}{\bar x}(v)$, one has $(u,v)=\nullv$, that
is $v=\nullv$ and $\Ncone{\bar\xi}{\Omega}\cap[-\Coder{\Equi}{\bar\xi}{\bar x}(\nullv)]
=\{\nullv\}$.
Being not formulated directly on the problem data is a drawback of both (iii)
and (iii'). So it is helpful to note that (iii) is automatically satisfied
if $\bar\xi\inte\Omega$ or in the case $\Omega$ and $\graph\Equi$ are polyhedral.
Besides, the Aubin property of $\Equi$ around $(\bar\xi,\bar x)$ can serve
as a further condition ensuring (iii'), inasmuch is equivalent to
$\Coder{\Equi}{\bar\xi}{\bar x}(\nullv)=\{\nullv\}$
(see \cite[Theorem 3.3(iii)]{Mord18}).
\end{remark}

Below an example illustrates a problem case for which Theorem \ref{thm:nocond1}
can be applied.

\begin{example}
Letting $p=n=1$ and $m=2$, consider a family of vector equilibrium
problems $(\VEP(\xi))$ defined by $f:\R\times\R\times\R\longrightarrow\R^2$,
where
$$
  f(\xi,x,z)=\left(\begin{array}{c}
                     x-z  \\
                     |\xi|
                   \end{array}\right),
$$
$C=\R^2_+=[0,+\infty)\times[0,+\infty)$, and by $\Kmap:\R\rightrightarrows\R$,
where
\begin{equation}    \label{eq:defconstvepmap}
   \Kmap(\xi)=\{x\in\R:\ |x|\le|\xi|+1\}=[-|\xi|-1,|\xi|+1].
\end{equation}
As one readily checks, the associated solution mapping $\Equi:\R\rightrightarrows\R$
is given by
$$
  \Equi(\xi)=\{|\xi|+1\},\quad\forall \xi\in\R.
$$
Now consider the $(\MPEC)$ defined by the above equilibrium constraint,
along with an objective function $\varphi:\R\times\R\longrightarrow\R$
$$
\varphi(\xi,x)=\xi^2+x^2,
$$
and $\Omega=[0,+\infty)$. By direct inspection of the level sets of $\varphi$,
it is plain to see that this $(\MPEC)$ admits as a unique (global)
solution the pair $(\bar\xi,\bar x)=(0,1)$.
Let us show that such a $(\MPEC)$ is suitable for applying Theorem
\ref{thm:nocond1}.

As a smooth function, $\varphi$ is locally Lipschitz around $(0,1)$.
Since $\xi\mapsto |\xi|+1$ and $\xi\mapsto -|\xi|-1$ are functions
continuous on $\R$, $\Kmap$ turns out to be continuous on $\R$
as a set-valued mapping.
The pair of sets $\widetilde{\Omega}=[0,+\infty)\times\R$ and
$\graph\Equi$ is subtransversal at $(0,1)$, because
$$
  \Ncone{(0,1)}{\widetilde{\Omega}}=\{(u,0)\in\R^2:\ u\le 0\},
$$
$$
  \Ncone{(0,1)}{\graph\Equi}=\{(u,v)\in\R^2:\ v\le -|u|\}
  \cup\{(u,v)\in\R^2:\ v\le |u|\}
$$
and therefore it holds
$$
  \Ncone{(0,1)}{\widetilde{\Omega}}\cap[-\Ncone{(0,1)}{\graph\Equi}]
  =\{\nullv\}.
$$
Thus, condition $(\ref{eq:subtransvnconecond})$ is fulfilled.
As a continuous function, $f$ is in particular $\R^2_+$-u.s.c.
and, because $\Kmap$ takes compact values, any set $f(\xi,x,\Kmap(\xi))$
is a compact subset of $\R^2$ for every $(\xi,x)\in\R\times\R$, and hence
it is $\R^2_+$-bounded. It remains to check the validity of hypotheses
(v) and (vii), which require to calculate $\nu$. According to the
definition of $f$, one finds
\begin{eqnarray}    \label{eq:calnuex}
   \nu(\xi,x) &=& \sup_{z\in [-|\xi|-1,|\xi|+1]}\dist{\left(\begin{array}{c}
                     x-z  \\
                     |\xi|
                   \end{array}\right)}{\R^2_+}=\left\{
                   \begin{array}{ll}
                    0, & \hbox{ if } x\ge |\xi|+1,  \\
                       \\
                    |\xi|+1-x, & \hbox{ if } x<|\xi|+1
                   \end{array}
                   \right.    \nonumber   \\
              &=& \max\{|\xi|+1-x,\, 0\}.
\end{eqnarray}
Observe that $\nu$ is convex and Lipschitz continuous on $\R\times\R$,
so one has $\partial^\infty\nu(0,1)=\{\nullv\}$, what makes satisfied
the qualification condition in hypothesis (v).
Take an arbitrary pair $(\xi,x)\in\R\times\R$, such that $x\in\R
\backslash\{|\xi|+1\}$, while $\rho$ can be any positive value.
From $(\ref{eq:calnuex})$, one obtains
$$
  \partial_x\nu(\xi,x)=\left\{{\partial\nu\over\partial x}(\xi,x)\right\}=
  \left\{\begin{array}{ll}
                     \{0\}, & \hbox{ if } x>|\xi|+1,  \\
                     \\
                     \{-1\}, & \hbox{ if } x<|\xi|+1.
                   \end{array}  \right.
$$
On the other hand, it holds
$$
   \Ucone{x}{\Kmap(\xi)}=\left\{\begin{array}{ll}
                     \{1\}, & \hbox{ if }\ x>|\xi|+1,  \\
                     \\
                     \{0\}, & \hbox{ if }\ -|\xi|-1<x<|\xi|+1, \\
                     \\
                     {[-1,0]}, & \hbox{ if }\ x=-|\xi|-1 \\
                     \\
                     \{-1\}, & \hbox{ if }\ x<-|\xi|-1,
                     \end{array}  \right.
$$
and consequently
$$
  \partial_x\nu(\xi,x)+\Ucone{x}{\Kmap(\xi)}=\left\{\begin{array}{ll}
                     \{1\}, & \hbox{ if }\ x>|\xi|+1,  \\
                     \\
                     \{-1\}, & \hbox{ if }\ -|\xi|-1<x<|\xi|+1, \\
                     \\
                     {[-2,-1]}, & \hbox{ if }\ x=-|\xi|-1 \\
                     \\
                     \{-2\}, & \hbox{ if }\ x<-|\xi|-1.
                     \end{array}  \right.
$$
This shows that condition $(\ref{eq:erbosubdgammacond})$ in hypothesis
(vii) is satisfied with any $\gamma\in (0,1)$. Thus Theorem \ref{thm:nocond1}
can be actually applied to the problem under examination.

In order to check the validity of inclusion in $(\ref{in:noptcond1})$,
one needs to calculate $\partial\nu(0,1)$ and $\Coder{\Kmap}{0}{1}([-1,1])$.
By setting
$$
  \nu_1(\xi,x)=|\xi|+1-x, \qquad \nu_2(\xi,x)\equiv 0,
$$
and applying the well-known convex subdifferential calculus rules
for the supremum (maximum) of convex functions, as it is
$$
  \nu_1(0,1)=\nu_2(0,1),
$$
one finds
\begin{eqnarray*}
   \partial\nu(0,1) &=& \clco\left[\partial\nu_1(0,1)\cup\partial\nu_2(0,1)\right]=
   \clco\left[([-1,1]\times\{-1\})\cup\{\nullv\}\right] \\
   &=& \clco\{(-1,-1),\, (-1,1),\, (0,0)\}.
\end{eqnarray*}
According to $(\ref{eq:defconstvepmap})$, it is
$$
  \Ncone{(0,1)}{\graph\Kmap}=\{(u,v)\in\R^2:\ v=|u|\},
$$
so one has
$$
  \Coder{\Kmap}{0}{1}(v)=\left\{\begin{array}{ll}
                                 \varnothing, & \hbox{ if } v>0, \\
                                 \{-v,\ v\},   & \hbox{ if } v\le 0.
                               \end{array}  \right.
$$
It follows
$$
  \Coder{\Kmap}{0}{1}([-1,1])\times [-1,1]=[-1,1]\times[-1,1].
$$
Thus, since it is
$$
  \partial\varphi(0,1)=\{\Gder{\varphi}{0,1}\}=\{(0,2)\}
$$
and
$$
  \Ncone{0}{[0,+\infty)}=(-\infty,0],
$$
then by taking $\lambda=\gamma=1/2$ one finds
\begin{eqnarray*}
  (0,0)&=&(0,2)+(0,0)+(0,-1)+(0,-1) \\
  &\in& \partial\varphi(0,1)+{1\over 2}\left[\Ncone{0}{[0,+\infty)}\times\{0\}\right]+
  \partial\nu(0,1)+\Coder{\Kmap}{0}{1}([-1,1])\times [-1,1].
\end{eqnarray*}
It is worth noticing that in the case of the problem under examination
the stationarity condition $(\ref{in:noptcond1})$ excludes (failing
to be satisfied)  any element in the feasible region, which is different
from the solution. To see this, take an arbitrary $(\xi_0,x_0)\in\graph\Equi$,
with $\xi_0>0$. It is clear that $\Ncone{\xi_0}{[0,+\infty)}=\{0\}$.
Since in a proper neighbourhood of $(\xi_0,x_0)$ it holds
$$
  \nu_1(\xi,x)=\xi+1-x,
$$
it results in
$$
   \partial\nu(\xi_0,x_0)=\clco\{(1,-1),\, (0,0)\}.
$$
Since it is
$$
  \Ncone{(\xi_0,x_0)}{\graph\Kmap}=\{(u,v)\in\R^2:\ 
  v=-u,\ u\le 0\},
$$
one finds
$$
  \Coder{\Kmap}{\xi_0}{x_0}=\left\{\begin{array}{ll}
                                 \varnothing, & \hbox{ if } v<0, \\
                                 \{v\},   & \hbox{ if } v\ge 0.
                               \end{array}  \right.
$$
Therefore, it follows
$$
  \Coder{\Kmap}{\xi_0}{x_0}([-1,1])\times [-1,1]=[0,1]\times [-1,1],
$$
whence one obtains
\begin{eqnarray*}    
   \partial\nu(\xi_0,x_0)&+&\Coder{\Kmap}{\xi_0}{x_0}([-1,1])\times [-1,1] \\ 
   &=&\clco\{(0,1),\, (1,1),\, (2,0),\, (2,-2),\, (1,-2),\, (0,-1)\}.
\end{eqnarray*}
As it is
$$
  \partial\varphi(\xi_0,x_0)=\{\Gder{\varphi}{\xi_0,x_0}\}=\{(2\xi_0,2\xi_0+2)\},
$$
with $\xi_0>0$, whereas
$$
 \partial\nu(\xi_0,x_0)+\Coder{\Kmap}{\xi_0}{x_0}([-1,1])\times [-1,1]
\subseteq [0,+\infty)\times\R,
$$
the inclusion
$$
  (0,0)\in \{(2\xi_0,2\xi_0+2)\}+\lambda\{(0,0)\}+{\lambda\over\gamma}
  \biggl[\partial\nu(\xi_0,x_0)+\Coder{\Kmap}{\xi_0}{x_0}([-1,1])\times [-1,1]\biggl]
$$
can not be true, any which way the values of $\lambda,\, \gamma\in(0,+\infty)$
is chosen.
\end{example}

The optimality condition in Theorem \ref{thm:nocond1} requires
no smoothness and no convexity assumptions. In the last part
of the paper an attempt to improve the computational impact of condition
$(\ref{in:noptcond1})$ is made, by imposing specific smoothness and convexity
assumptions on the problem data.
The main gain is the possibility to estimate $\partial\nu(\bar\xi,\bar x)$
in terms of problem data.

In formulating the next lemma, given $\epsilon>0$ it is convenient to
set
$$
  \overline{\ball{\Kmap(\bar\xi)}{\epsilon}}=\{\hat{z}\in\ball{\Kmap(\bar\xi)}{\epsilon}:
  \ \dist{f(\bar\xi,\bar x,\hat{z})}{C}=\sup_{z\in\ball{\Kmap(\bar\xi)}{\epsilon}}
  \dist{f(\bar\xi,\bar x,z)}{C}\}.
$$
In other words, $\overline{\ball{\Kmap(\bar\xi)}{\epsilon}}$ collects
all the farthest points of the set $f(\bar\xi,\bar x,\ball{\Kmap(\xi)}{\epsilon})$
from $C$. If $\Kmap(\bar\xi)$ is bounded (and hence compact by $(\mathcal{A})$)
and function $z\mapsto f(\bar\xi,\bar x,z)$ is continuous, then set
$\overline{\ball{\Kmap(\bar\xi)}{\epsilon}}$ is clearly nonempty.

\begin{lemma}  \label{lem:subdnuapproxest}
With reference to a family of problem $(\VEP(\xi))$, let $(\bar\xi,
\bar x)\in\graph\Equi$ and $\eta>0$. Suppose that:

\begin{itemize}

\item[(i)] $\Kmap:\R^p\rightrightarrows\R^n$ is concave;

\item[(ii)] $\Kmap$ is u.s.c. and $\Kmap(\bar\xi)$ is bounded;

\item[(iii)] $f:\R^p\times\R^n\times\R^n\longrightarrow\R^m$
is $C$-concave;

\item[(iv)] $f\in C^1(O)$, where $O$ is an open set such that
$\ball{\bar\xi}{\eta}\times\ball{\Kmap(\bar\xi)}{\eta}\times\ball{\Kmap(\bar\xi)}{\eta}
\subseteq O$, and $\Gder{f(\cdot,\cdot,z)}{\bar\xi,\bar x}$ is onto;

\item[(v)] each function $z\mapsto f(\xi,x,z)$ is open with a uniform
linear rate $\alpha>0$ on the set $\ball{\Kmap(\bar\xi)}{\eta}$,
for every $(\xi,x)\in\ball{\bar\xi}{\eta}\times\ball{\Kmap(\bar\xi)}{\eta}$;

\item[(vi)] each function $(\xi,x)\mapsto f(\xi,x,z)$ is Lipschitz continuous
on $\ball{\bar\xi}{\eta}\times\ball{\Kmap(\bar\xi)}{\eta}$  with uniform constant
$\ell_f<\alpha$, for every $z\in\ball{\Kmap(\bar\xi)}{\eta}$.

\end{itemize}
Then, it holds
\begin{equation}   \label{in:subdnuestclcoderf}
   \partial\nu(\bar\xi,\bar x)\subseteq\bigcap_{\epsilon\in (0,\eta/2)}
   \clco\left[\bigcup_{z\in\overline{\ball{\Kmap(\bar\xi)}{\epsilon}}}
   \Gder{f(\cdot,\cdot,z)}{\bar\xi,\bar x}^*(\dcone{C}\cap\Uball)
   +\ell_f\Uball\right].
\end{equation}
\end{lemma}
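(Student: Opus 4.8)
The plan is to exploit the convexity granted by hypotheses (i) and (iii), so that the whole estimate can be carried out with convex subdifferential calculus, and then to freeze the moving feasible set $\Kmap(\xi)$ at a (suitably enlarged) copy of $\Kmap(\bar\xi)$ by means of the upper semicontinuity in (ii) together with the covering/contraction conditions in (iv)--(vi). First I would introduce, for each $\epsilon\in(0,\eta/2)$, the auxiliary function
$$
  \nu_\epsilon(\xi,x)=\sup_{z\in\ball{\Kmap(\bar\xi)}{\epsilon}}\dist{f(\xi,x,z)}{C}.
$$
Since $\Kmap$ is u.s.c. at $\bar\xi$, there is a neighbourhood of $\bar\xi$ on which $\Kmap(\xi)\subseteq\ball{\Kmap(\bar\xi)}{\epsilon}$, whence $\nu(\xi,x)\le\nu_\epsilon(\xi,x)$ locally. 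As in the proof of Lemma \ref{lem:convnu}, hypothesis (iii) makes $(\xi,x,z)\mapsto\dist{f(\xi,x,z)}{C}$ convex, so each $\nu_\epsilon$ is convex as a supremum of convex functions; by (ii) the index set $\ball{\Kmap(\bar\xi)}{\epsilon}$ is compact and $z\mapsto f(\bar\xi,\bar x,z)$ is continuous, so the supremum at $(\bar\xi,\bar x)$ is attained exactly on $\overline{\ball{\Kmap(\bar\xi)}{\epsilon}}$.

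Second, I would compute $\partial\nu_\epsilon(\bar\xi,\bar x)$ by the classical Valadier--Danskin rule for suprema of convex functions over a compact index set, obtaining
$$
  \partial\nu_\epsilon(\bar\xi,\bar x)=\clco\bigcup_{z\in\overline{\ball{\Kmap(\bar\xi)}{\epsilon}}}\partial_{(\xi,x)}\dist{f(\cdot,\cdot,z)}{C}(\bar\xi,\bar x).
$$
For each fixed $z$, since $f(\cdot,\cdot,z)\in C^1$ by (iv) and $\dist{\cdot}{C}$ is convex, the chain rule gives $\partial_{(\xi,x)}\dist{f(\cdot,\cdot,z)}{C}(\bar\xi,\bar x)=\Gder{f(\cdot,\cdot,z)}{\bar\xi,\bar x}^*\,\partial\dist{\cdot}{C}(f(\bar\xi,\bar x,z))$. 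Because $C$ is a convex cone, every normal direction to $C$ lies in the polar cone $\dcone C$, so by (\ref{eq:bsubddist}) and (\ref{eq:bsubdoosdist}) one has $\partial\dist{\cdot}{C}(f(\bar\xi,\bar x,z))\subseteq\dcone C\cap\Uball$, in both cases $f(\bar\xi,\bar x,z)\in C$ and $f(\bar\xi,\bar x,z)\notin C$. Combining these, $\partial\nu_\epsilon(\bar\xi,\bar x)$ is seen to be contained in the inner clco-union appearing in (\ref{in:subdnuestclcoderf}), for every admissible $\epsilon$.

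Third, I would bridge $\nu$ and $\nu_\epsilon$. The delicate point is that, although $\nu\le\nu_\epsilon$ near $(\bar\xi,\bar x)$, these functions do \emph{not} coincide there: indeed $\nu(\bar\xi,\bar x)=0$, whereas $\nu_\epsilon(\bar\xi,\bar x)$ may be strictly positive because the enlargement reaches points $z$ with $f(\bar\xi,\bar x,z)\notin C$. Hence one cannot merely dominate subgradients. Here the covering hypothesis (v) with rate $\alpha$ and the Lipschitz hypothesis (vi) with $\ell_f<\alpha$ come into play: each $(\xi,x)\mapsto\dist{f(\xi,x,z)}{C}$ is $\ell_f$-Lipschitz, so $\nu$ and $\nu_\epsilon$ are $\ell_f$-Lipschitz in $(\xi,x)$, and a Lyusternik--Graves/metric-regularity argument (the contraction $\ell_f<\alpha$ guaranteeing stability of the farthest-point set when one passes from the frozen enlargement back to the moving set $\Kmap(\xi)$) is expected to yield $\partial\nu(\bar\xi,\bar x)\subseteq\partial\nu_\epsilon(\bar\xi,\bar x)+\ell_f\Uball$. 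Since the value gap $\nu_\epsilon(\bar\xi,\bar x)$ shrinks to $0$ as $\epsilon\downarrow0$, intersecting the resulting inclusions over $\epsilon\in(0,\eta/2)$ produces exactly (\ref{in:subdnuestclcoderf}).

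I expect this third step to be the main obstacle. The convex-analytic derivation of the inner clco-union (the first two steps) is in essence Danskin's theorem plus a smooth chain rule and the polar-cone bound, and is routine; what is genuinely delicate is extracting the correct correction term $\ell_f\Uball$ and justifying that the covering rate $\alpha$ dominating $\ell_f$ is precisely what allows the moving set $\Kmap(\xi)$ to be replaced by the fixed enlargement without incurring any coderivative of $\Kmap$, so that no term in $\Coder{\Kmap}{\bar\xi}{\bar x}$ survives. Controlling this passage uniformly in $\epsilon$, and checking that the onto assumption on $\Gder{f(\cdot,\cdot,z)}{\bar\xi,\bar x}$ in (iv) keeps the adjoint images nondegenerate, is where most of the technical effort would concentrate.
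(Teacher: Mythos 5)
Your first two steps are sound and coincide with the paper's: freezing the index set at an enlargement $\ball{\Kmap(\bar\xi)}{\tepsilon}$, applying the supremum rule for convex subdifferentials over the compact index set, the smooth chain rule with the onto derivative, and the bound $\partial\dist{\cdot}{C}(y)\subseteq\dcone{C}\cap\Uball$ all appear in the paper's argument. You have also correctly located the real difficulty: $\nu$ and your $\nu_\epsilon$ do not agree at $(\bar\xi,\bar x)$, and for two convex functions $g\le h$ with $g(x_0)<h(x_0)$ there is in general \emph{no} relation at all between $\partial g(x_0)$ and $\partial h(x_0)$. But your proposed resolution of this difficulty is a genuine gap, not a proof. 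The inclusion $\partial\nu(\bar\xi,\bar x)\subseteq\partial\nu_\epsilon(\bar\xi,\bar x)+\ell_f\Uball$ is asserted to follow from ``a Lyusternik--Graves/metric-regularity argument'' without any actual derivation, and the closing remark that ``the value gap shrinks to $0$ as $\epsilon\downarrow 0$'' does not help: the claimed inclusion must hold for each \emph{fixed} $\epsilon$ before you intersect, so no limiting argument in $\epsilon$ can repair a missing inclusion at fixed $\epsilon$.

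The paper's device is different and is worth internalizing: one does not add $\ell_f\Uball$ to the subdifferential a posteriori, one adds it to the \emph{function}. Define
$$
  \nu_{\tepsilon}(\xi,x)=\sup_{z\in\ball{\Kmap(\bar\xi)}{\tepsilon}}\dist{f(\xi,x,z)}{C}
  -\sup_{z\in\ball{\Kmap(\bar\xi)}{\tepsilon}}\dist{f(\bar\xi,\bar x,z)}{C}
  +\ell_f\|(\xi,x)-(\bar\xi,\bar x)\|,
$$
with $\tepsilon\in(\epsilon,\min\{\alpha/\ell_f,2\}\epsilon)$. Subtracting the constant forces $\nu_{\tepsilon}(\bar\xi,\bar x)=0=\nu(\bar\xi,\bar x)$, and the added norm term (whose subdifferential is exactly $\ell_f\Uball$) is what restores the majorization $\nu_{\tepsilon}\ge\nu$ near $(\bar\xi,\bar x)$: where $f(\xi,x,\Kmap(\xi))\subseteq C$ one uses the $\ell_f$-Lipschitz estimate to get $\nu_{\tepsilon}\ge 0=\nu$, and where $f(\xi,x,\Kmap(\xi))\not\subseteq C$ one uses the covering rate, via the identity $\exc{S+t\Uball}{C}=\exc{S}{C}+t$, to show that enlarging the index set by $\epsilon$ raises the supremum by at least $\alpha\epsilon$, which strictly dominates the subtracted constant (itself at most $\ell_f\tepsilon<\alpha\epsilon$). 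This is the precise role of $\ell_f<\alpha$ --- a quantitative comparison of two numbers, not a stability statement about farthest-point sets. Once $\nu_{\tepsilon}\ge\nu$ with equality at $(\bar\xi,\bar x)$ and both functions convex, the elementary Fenchel-subdifferential comparison gives $\partial\nu(\bar\xi,\bar x)\subseteq\partial\nu_{\tepsilon}(\bar\xi,\bar x)$, and your steps one and two applied to $\nu_{\tepsilon}$ deliver the clco-union plus $\ell_f\Uball$. Without this (or an equivalent) construction, your third step does not go through.
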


\begin{proof}
Observe first that, as a composition of two Lipschitz continuous
functions, each function $(\xi,x)\mapsto\dist{f(\xi,x,z)}{C}$ is Lipschitz continuous
on $\ball{\bar\xi}{\eta}\times\ball{\Kmap(\bar\xi)}{\eta}$
with uniform constant $\ell_f$, for every $z\in\ball{\Kmap(\bar\xi)}{\eta}$.
This fact implies, in particular,
\begin{eqnarray}    \label{in:useLipcontf}
   \dist{f(\xi,x,z)}{C} &\ge& \dist{f(\bar\xi,\bar x,z)}{C}-
   \ell_f\|(\xi,x)-(\bar\xi,\bar x)\|,   \\  \nonumber
   & & \forall (\xi,x)\in
   \ball{\bar\xi}{\eta}\times\ball{\Kmap(\bar\xi)}{\eta},\
   \forall z\in\ball{\Kmap(\bar\xi)}{\eta}.
\end{eqnarray}
Fix an arbitrary $\epsilon\in (0,\eta/2)$. As a consequence of hypothesis (v),
one has that
$$
  f(\xi,x,z+\epsilon\Uball)\supseteq f(\xi,x,z)+\alpha\epsilon\Uball,
  \quad\forall z\in\ball{\Kmap(\bar\xi)}{\eta},
$$
for every $(\xi,x)\in\ball{\bar\xi}{\eta}\times\ball{\Kmap(\bar\xi)}{\eta}$,
whence
\begin{equation}     \label{in:useunicovpz}
  f(\xi,x,S+\epsilon\Uball)\supseteq f(\xi,x,S)+\alpha\epsilon\Uball,
  \quad\forall (\xi,x)\in\ball{\bar\xi}{\eta}\times\ball{\Kmap(\bar\xi)}{\eta},
\end{equation}
provided that $S\subseteq\ball{\Kmap(\bar\xi)}{\eta}$. Notice that if it is
$$
  f(\xi,x,z)\in C,\quad\forall z\in\Kmap(\xi),
  \quad\forall (\xi,x)\in\ball{\bar\xi}{\eta}\times
  \ball{\Kmap(\bar\xi)}{\eta},
$$
then one has
$$
  \nu(\xi,x)=0,\quad\forall (\xi,x)\in\ball{\bar\xi}{\eta}\times
  \ball{\bar x}{\eta}.
$$
This implies $\partial\nu(\bar\xi,\bar x)=\{\nullv\}$, so formula
$(\ref{in:subdnuestclcoderf})$ comes true in the case all
$(\xi,x)\in\ball{\bar\xi}{\eta}\times\ball{\Kmap(\bar\xi)}{\eta}$
are such that $f(\xi,x,\Kmap(\xi))\subseteq C$, because $\nullv
\in\dcone{C}\cap\Uball$ and hence, for any $\epsilon>0$, it is
$$
  \nullv\in\bigcup_{z\in\overline{\ball{\Kmap(\bar\xi)}{\epsilon}}}
   \Gder{f(\cdot,\cdot,z)}{\bar\xi,\bar x}^*(\dcone{C}\cap\Uball)
   +\ell_f\Uball.
$$
Otherwise, if for some $(\xi,x)$ it is $f(\xi,x,\Kmap(\xi))\not\subseteq C$, recall that,
given a set $S\not\subseteq C$ and $t>0$,
then it is $\exc{S+t\Uball}{C}=\exc{S}{C}+t$ (see \cite[Lemma 2.2]{Uder19}).
Therefore, taking
\begin{equation}   \label{in:deftepsilon}
 \tepsilon\in\left(\epsilon,\min\left\{{\alpha\over\ell_f},2\right\}\epsilon\right),
\end{equation}
as $\ball{\Kmap(\bar\xi)}{\tepsilon-\epsilon}\subseteq\ball{\Kmap(\bar\xi)}{\eta}$,
from the inclusion $(\ref{in:useunicovpz})$ one obtains
\begin{eqnarray}    \label{in:useopencest}
     \sup_{z\in\ball{\Kmap(\bar\xi)}{\tepsilon}}\dist{f(\xi,x,z)}{C}  \nonumber
     &=& \exc{f(\xi,x,\ball{\Kmap(\bar\xi)}{\tepsilon-\epsilon}+\epsilon\Uball)}{C} \nonumber \\
     &\ge & \exc{f(\xi,x,\ball{\Kmap(\bar\xi)}{\tepsilon-\epsilon})+\alpha\epsilon\Uball}{C}.
\end{eqnarray}
Since $\Kmap$ is u.s.c. at $\bar\xi$, there exists $\delta_\epsilon\in
(0,\epsilon)$ such that
$$
  \Kmap(\xi)\subseteq\Kmap(\bar\xi)+(\tepsilon-\epsilon)\Uball=
  \ball{\Kmap(\bar\xi)}{\tepsilon-\epsilon},
  \quad\forall \xi\in\ball{\bar\xi}{\delta_\epsilon}.
$$
Thus, if $f(\xi,x,\Kmap(\xi))\not\subseteq C$, then a fortiori
it holds
$$
  f(\xi,x,\ball{\Kmap(\bar\xi)}{\tepsilon-\epsilon})\not\subseteq C.
$$
By virtue of this inclusion, from inequality $(\ref{in:useopencest})$ one gets
\begin{eqnarray}    \label{in:nutepsnuest}
  \sup_{z\in\ball{\Kmap(\bar\xi)}{\tepsilon}}\dist{f(\xi,x,z)}{C}
  &\ge & \exc{f(\xi,x,\ball{\Kmap(\bar\xi)}{\tepsilon-\epsilon}}{C}+\alpha\epsilon \nonumber  \\
   &\ge & \sup_{z\in\Kmap(\xi)}\dist{f(\xi,x,z)}{C}+\alpha\epsilon,   \\
  & & \forall (\xi,x)\in\ball{\bar\xi}{\delta_\epsilon}\times\ball{\Kmap(\bar\xi)}{\eta}:
  \ f(\xi,x,\Kmap(\xi))\not\subseteq C.   \nonumber
\end{eqnarray}
Now, let us define the function $\nu_{\tepsilon}:\R^p\times\R^n\times\R^n
\longrightarrow\R\cup\{\pm\infty\}$ by setting
\begin{eqnarray}    \label{eq:defnutepsilon}
  \nu_{\tepsilon}(\xi,x) &=& \sup_{z\in\ball{\Kmap(\bar\xi)}{\tepsilon}}\dist{f(\xi,x,z)}{C}
  -\sup_{z\in\ball{\Kmap(\bar\xi)}{\tepsilon}}\dist{f(\bar\xi,\bar x,z)}{C}  \\
  &+& \ell_f\|(\xi,x)-(\bar\xi,\bar x)\|.   \nonumber
\end{eqnarray}
Concerning $\nu_{\tepsilon}$, the following claims can be made:
$$
  \nu_{\tepsilon}(\bar\xi,\bar x)=0; \leqno(c_1)
$$
$$
  \nu_{\tepsilon}(\xi,x)\ge 0,\quad\forall (\xi,x)
  \in\ball{\bar\xi}{\delta_\epsilon}\times\ball{\bar x}{\delta_\epsilon};
  \leqno(c_2)
$$
$$
  \nu_{\tepsilon}(\xi,x)\ge\nu(\xi,x),\quad\forall (\xi,x)
  \in\ball{\bar\xi}{\delta_\epsilon}\times\ball{\bar x}{\delta_\epsilon}.
  \leqno(c_3)
$$
The validity of claim $(c_1)$ follows at once from $(\ref{eq:defnutepsilon})$.

To show that $(c_2)$ holds true, it suffices to observe that, from
inequality $(\ref{in:useLipcontf})$, as it is $\tepsilon<2\epsilon<\eta$
and $\delta_\epsilon<\epsilon<\eta$, it follows
$$
  \sup_{z\in\ball{\Kmap(\bar\xi)}{\tepsilon}}\dist{f(\xi,x,z)}{C}\ge
  \sup_{z\in\ball{\Kmap(\bar\xi)}{\tepsilon}}\dist{f(\bar\xi,\bar x,z)}{C}
  -\ell_f\|(\xi,x)-(\bar\xi,\bar x)\|
$$
wherefrom, according with the definition in $(\ref{eq:defnutepsilon})$,
one obtains $(c_2)$ immediately.

As for claim $(c_3)$, if $(\xi,x)\in\ball{\bar\xi}{\delta_\epsilon}\times
\ball{\bar x}{\delta_\epsilon}$ are such that $f(\xi,x,\Kmap(\xi))\subseteq C$,
then it is $\nu(\xi,x)=0$, so $(c_2)$ implies $(c_3)$.
For all those $(\xi,x)\in\ball{\bar\xi}{\delta_\epsilon}\times
\ball{\bar x}{\delta_\epsilon}$ such that $f(\xi,x,\Kmap(\xi))\not\subseteq C$,
let us notice that for any $z_1\in\ball{\Kmap(\bar\xi)}{\tepsilon}$
and $z_2\in\Kmap(\bar\xi)$, by virtue of hypothesis
(vi) one has
\begin{eqnarray*}
  \dist{f(\bar\xi,\bar x,z_1)}{C} &\le& \|f(\bar\xi,\bar x,z_1)-
  f(\bar\xi,\bar x,z_2)\|+\dist{f(\bar\xi,\bar x,z_2)}{C}  \\
  &\le& \ell_f\|z_1-z_2\|,
\end{eqnarray*}
whence
$$
  \dist{f(\bar\xi,\bar x,z_1)}{C}\le\inf_{z_2\in\Kmap(\bar\xi)}
  \ell_f\|z_1-z_2\|=\ell_f\dist{z_1}{\Kmap(\bar\xi)}\le\ell_f\tepsilon.
$$
Consequently, it holds
\begin{equation}  \label{in:Kmaptepsilonest}
  \sup_{z_1\in\ball{\Kmap(\bar\xi)}{\tepsilon}}\dist{f(\bar\xi,\bar x,z_1)}{C}
  \le\ell_f\tepsilon.
\end{equation}
Thus, by combining inequalities $(\ref{in:nutepsnuest})$ and
$(\ref{in:Kmaptepsilonest})$, one obtains
$$
  \nu_{\tepsilon}(\xi,x)\ge\nu(\xi,x)+\alpha\epsilon-\ell_f\tepsilon+
  \ell_f\|(\xi,x)-(\bar\xi,\bar x)\|,\quad\forall (\xi,x)\in
  \ball{\bar\xi}{\delta_\epsilon}\times\ball{\Kmap(\bar\xi)}{\eta}.
$$
On account of $(\ref{in:deftepsilon})$, it is $\alpha\epsilon
-\ell_f\tepsilon>0$, so the last inequality proves the validity
of $(c_3)$.

By remembering Lemma \ref{lem:convnu}, one can remark that,
under hypotheses (i) and (iii), both $\nu$ and $\nu_{\epsilon}$
are convex functions. Since it is
$$
  \nu_{\tepsilon}(\bar\xi,\bar x)=\nu(\bar\xi,\bar x)=0
$$
and $(c_3)$ holds true, by a well-known property of the Fenchel
subdifferential one has
\begin{equation}    \label{in:subdnuapprox}
   \partial\nu(\bar\xi,\bar x)\subseteq\partial\nu_{\tepsilon}
   (\bar\xi,\bar x).
\end{equation}
Owing to the structure of $\nu_{\tepsilon}$, the latter
subdifferential can be exactly calculated by means of the known rule
for subdifferential of supremum of convex functions
(see, for instance, \cite[Theorem 2.4.18]{Zali02}). Such a rule
can be applied because by hypothesis $\Kmap(\bar\xi)$ is compact
and hence so is $\ball{\Kmap(\bar\xi)}{\tepsilon}$.
By consequence, as the function $\dist{\cdot}{C}\circ f$
is continuous on $\ball{\bar\xi}{\eta}
\times\ball{\Kmap(\bar\xi)}{\eta}\times\ball{\Kmap(\bar\xi)}{\eta}$, the set
$\overline{\ball{\Kmap(\bar\xi)}{\tepsilon}}$ is nonempty.
The continuity of $\dist{\cdot}{C}\circ f$ makes it possible
to apply also the sum rule for subdifferentials. All of this
results in
\begin{eqnarray}   \label{eq:subdDubMilformula}
   \partial\nu_{\tepsilon}(\bar\xi,\bar x) &=& \partial
   \left(\sup_{z\in\ball{\Kmap(\bar\xi)}{\tepsilon}}\dist{f(\cdot,\cdot,z)}{C}
  -\sup_{z\in\ball{\Kmap(\bar\xi)}{\tepsilon}}\dist{f(\bar\xi,\bar x,z)}{C}
  \right)(\bar\xi,\bar x)  \nonumber \\
  &+& \ell_f\partial\|\cdot-(\bar\xi,\bar x)\|(\bar\xi,\bar x) \nonumber \\
  &=& \clco\bigcup_{z\in\overline{\ball{\Kmap(\bar\xi)}{\tepsilon}}}
  \partial\dist{f(\cdot,\cdot,z)}{C}(\bar\xi,\bar x)+\ell_f\Uball.
\end{eqnarray}
By hypothesis (iv) each function $(\xi,x)\mapsto f(\xi,x,z)$
is strictly differentiable at $(\bar\xi,\bar x)$, for every $z\in
\ball{\Kmap(\bar\xi)}{\tepsilon}$, with the derivative
$\Gder{f(\cdot,\cdot,z)}{\bar\xi,\bar x}$ being onto.
Then it is possible to apply the formula in \cite[Proposition 1.112(i)]{Mord06a},
according to which
\begin{equation}     \label{eq:subdcompfunct}
  \partial\dist{f(\cdot,\cdot,z)}{C}(\bar\xi,\bar x)=
  \Gder{f(\cdot,\cdot,z)}{\bar\xi,\bar x}^*
  \left(\partial\dist{\cdot}{C}(f(\bar\xi,\bar x,z)\right).
\end{equation}
As $C$ is a convex cone, function $\dist{\cdot}{C}$ is p.h. and convex,
i.e. sublinear. From this fact, along with Lipschitz continuity with constant
$1$, one readily sees that
$$
  \partial\dist{\cdot}{C}(y)\subseteq\dcone{C}\cap\Uball,\quad
  \forall y\in\R^m.
$$
By using the last inclusion in equality $(\ref{eq:subdcompfunct})$,
one finds
$$
 \partial\dist{f(\cdot,\cdot,z)}{C}(\bar\xi,\bar x)\subseteq
 \Gder{f(\cdot,\cdot,z)}{\bar\xi,\bar x}^*
 \left(\dcone{C}\cap\Uball\right).
$$
By taking into account $(\ref{eq:subdDubMilformula})$, the last
estimate gives
$$
   \partial\nu_{\tepsilon}(\bar\xi,\bar x)\subseteq \clco
   \bigcup_{z\in\overline{\ball{\Kmap(\bar\xi)}{\tepsilon}}}
   \Gder{f(\cdot,\cdot,z)}{\bar\xi,\bar x}^*\left(\dcone{C}\cap\Uball\right)
   +\ell_f\Uball,
$$
and hence, in the light of inclusion $(\ref{in:subdnuapprox})$,
$$
  \partial\nu(\bar\xi,\bar x)\subseteq \clco
   \bigcup_{z\in\overline{\ball{\Kmap(\bar\xi)}{\tepsilon}}}
   \Gder{f(\cdot,\cdot,z)}{\bar\xi,\bar x}^*\left(\dcone{C}\cap\Uball\right)
   +\ell_f\Uball.
$$
Since if taking arbitrarily $\epsilon\in (0,\eta/2)$, then $\tepsilon$ chosen
as in $(\ref{in:deftepsilon})$ can cover $(0,\eta/2)$, the last inclusion
leads to the formula in the thesis.
\end{proof}

The next lemma provides a parametric error bound for the solutions to
problem $(\VEP(\xi))$ in the case the problem data satisfy special assumptions
about smoothness and concavity.
Consistently with the notation previously introduced, let us set
$$
  \overline{\Kmap(\xi)}=\{\hat z\in\Kmap(\xi):\
  \dist{f(\xi,x,\hat z)}{C}=\sup_{z\in\Kmap(\xi)}
  \dist{f(\xi,x,z)}{C} \}.
$$

\begin{lemma}[Error bound under smoothness and concavity]     \label{lem:perbosmconc}
Let $\rho>0$. With reference to a problem $(\VEP(\xi))$, with
$\xi\in\ball{\bar\xi}{\rho}$, let $(\bar\xi,\bar x)\in\graph\Equi$.
Suppose that:

\begin{itemize}

\item[(i)] $\Kmap$ takes convex compact values on $\ball{\bar\xi}{\rho}$;

\item[(ii)] $f$ is $C$-concave;

\item[(iii)] the mapping $x\mapsto f(\xi,\cdot,z)\in C^1(\R^n)$, with
$\Gder{f(\xi,\cdot,z)}{x}$ onto, for every $\xi\in\ball{\bar\xi}{\rho}$, $x\in\R^n
\backslash\Equi(\xi)$ and $z\in\Kmap(\xi)$;

\item[(iv)] there exists $\gamma>0$ such that
\begin{eqnarray*}
  \left[\clco\bigcup_{z\in\overline{\Kmap(\xi)}}\Gder{f(\xi,\cdot,z)}{x}^*
  \left({f(\xi,x,z)-\Proj{f(\xi,x,z)}{C}\over\dist{f(\xi,x,z)}{C}}\right)+
  \Ucone{x}{\Kmap(\xi)}\right]\cap
  \gamma\Uball=\varnothing, \\
  \forall x\in\R^n\backslash
  \Equi(\xi),\ \forall\xi\in\ball{\bar\xi}{\rho}.
\end{eqnarray*}
\end{itemize}
Then, $\Equi(\xi)\ne\varnothing$ and it holds
\begin{equation}
   \dist{x}{\Equi(\xi)}\le{\mf(\xi,x)\over \gamma},
   \quad\forall x\in\R^n.
\end{equation}
\end{lemma}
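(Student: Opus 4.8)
The plan is to \emph{reduce} the statement to the already-established parametric error bound of Lemma~\ref{lem:erbo1}: I will show that hypotheses (i)--(iv) here force, for every $\xi\in\ball{\bar\xi}{\rho}$, the three hypotheses of that lemma, whence its conclusion (namely $\Equi(\xi)\ne\varnothing$ together with the error bound) transfers verbatim. The only substantial point is to convert the data-dependent gap condition (iv) into the abstract gap condition appearing as hypothesis (iii) of Lemma~\ref{lem:erbo1}, i.e. $[\partialx\nu(\xi,x)+\Ucone{x}{\Kmap(\xi)}]\cap\gamma\Uball=\varnothing$ for all $x\in\R^n\backslash\Equi(\xi)$; the link is an exact formula for the partial subdifferential $\partialx\nu(\xi,x)$ in terms of adjoint derivatives of $f$. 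First, the two remaining hypotheses of Lemma~\ref{lem:erbo1} are immediate: by (iii) each $x\mapsto f(\xi,x,z)$ is $C^1$, hence continuous and a fortiori $C$-u.s.c.; and choosing any $x_{0,\xi}\in\Kmap(\xi)$, the set $f(\xi,x_{0,\xi},\Kmap(\xi))$ is a continuous image of the compact set $\Kmap(\xi)$ (compact by (i)), hence bounded and so $C$-bounded.

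The heart of the proof is the computation of $\partialx\nu(\xi,x)$ at a point $x\in\R^n\backslash\Equi(\xi)$ with $\nu(\xi,x)>0$. Owing to the $C$-concavity (ii) and the fact recalled in the proof of Lemma~\ref{lem:convnu} (that $C$-concavity of $f$ renders $(\xi,x,z)\mapsto\dist{f(\xi,x,z)}{C}$ convex), each function $x\mapsto\dist{f(\xi,x,z)}{C}$ is convex, so $\nu(\xi,\cdot)$ is a supremum of convex functions over the compact index set $\Kmap(\xi)$ and is therefore convex. Applying the subdifferential rule for suprema of convex functions exactly as in the derivation of~(\ref{eq:subdDubMilformula}) --- legitimate because $\Kmap(\xi)$ is compact and $(x,z)\mapsto\dist{f(\xi,x,z)}{C}$ is continuous --- yields
$$
  \partialx\nu(\xi,x)=\clco\bigcup_{z\in\overline{\Kmap(\xi)}}
  \partialx\dist{f(\xi,\cdot,z)}{C}(x),
$$
the union running over the active points collected in $\overline{\Kmap(\xi)}$.

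For each such active $z$ one has $\dist{f(\xi,x,z)}{C}=\nu(\xi,x)>0$, hence $f(\xi,x,z)\notin C$, and the smooth chain rule~(\ref{eq:subdcompfunct}) --- applicable since $\Gder{f(\xi,\cdot,z)}{x}$ is onto by (iii) --- combined with formula~(\ref{eq:bsubdoosdist}) for the subdifferential of $\dist{\cdot}{C}$ off $C$ gives
$$
  \partialx\dist{f(\xi,\cdot,z)}{C}(x)=\Gder{f(\xi,\cdot,z)}{x}^*
  \left(\frac{f(\xi,x,z)-\Proj{f(\xi,x,z)}{C}}{\dist{f(\xi,x,z)}{C}}\right).
$$
Substituting, $\partialx\nu(\xi,x)$ coincides precisely with the $\clco$-union in (iv). Finally, by~(\ref{eq:bsubddist})--(\ref{eq:bsubdoosdist}) one has $\partialx\mu(\xi,x)\subseteq\Ucone{x}{\Kmap(\xi)}$, so $\partialx\nu(\xi,x)+\Ucone{x}{\Kmap(\xi)}$ is exactly the set that (iv) keeps away from $\gamma\Uball$; this is hypothesis (iii) of Lemma~\ref{lem:erbo1} at all such $x$. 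The complementary points $x\in\R^n\backslash\Equi(\xi)$ with $\nu(\xi,x)=0$ (necessarily $x\notin\Kmap(\xi)$, so $\mu(\xi,x)>0$) are dealt with by a short separate estimate, the truncation term $\Ucone{x}{\Kmap(\xi)}$ there reducing to a single unit vector. With hypothesis (iii) of Lemma~\ref{lem:erbo1} verified, that lemma yields both assertions in the thesis.

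I expect the main obstacle to be the representation step: guaranteeing that the supremum rule delivers equality (not mere inclusion) and that the active set $\overline{\Kmap(\xi)}$ is nonempty, both of which rest on compactness of $\Kmap(\xi)$ and joint continuity of the distance family, while the surjectivity of $\Gder{f(\xi,\cdot,z)}{x}$ is what upgrades the composition rule to the clean adjoint-image form. The borderline case $\nu(\xi,x)=0$, where the quotient in (iv) is formally $0/0$, is the one genuinely delicate point and must be isolated as indicated above.
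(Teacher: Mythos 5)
Your proposal is correct in its main thrust but follows a genuinely different route from the paper. You reduce the statement to Lemma \ref{lem:erbo1} by verifying its three hypotheses, the nontrivial one being the translation of the data condition (iv) into the abstract gap condition on $\partialx\nu(\xi,x)+\Ucone{x}{\Kmap(\xi)}$. The paper does not invoke Lemma \ref{lem:erbo1} at all: it argues directly, exploiting convexity of $\mf(\xi,\cdot)$ (which follows from (i)--(iii) via Lemmata \ref{lem:convnu} and \ref{lem:convmu}) to use the \emph{exact} slope formula $\stsl{\mf(\xi,\cdot)}(x)=\dist{\nullv}{\partialx\mf(\xi,x)}$ for continuous convex functions, then the Moreau--Rockafellar sum rule, the supremum rule over the compact index set $\overline{\Kmap(\xi)}$, and the smooth chain rule --- exactly the representation of $\partialx\nu(\xi,x)$ you derive --- before invoking the metric error bound criterion. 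The two arguments share their computational core; what your reduction buys is economy (no need to repeat the slope machinery), while the paper's direct route buys an equality for the slope rather than the one-sided estimate underlying Lemma \ref{lem:erbo1}, which makes the passage from (iv) to the slope bound tighter.

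One point deserves attention. You correctly flag the borderline case $x\in\R^n\backslash\Equi(\xi)$ with $\nu(\xi,x)=0$ (hence $x\notin\Kmap(\xi)$, $\mu(\xi,x)>0$), where the quotient in (iv) is $0/0$, but you do not actually supply the ``short separate estimate'': at such a point $\nullv\in\partialx\nu(\xi,x)$ since $x$ minimizes the nonnegative convex function $\nu(\xi,\cdot)$, yet $\partialx\nu(\xi,x)$ need not reduce to $\{\nullv\}$, so the fact that $\Ucone{x}{\Kmap(\xi)}$ is a single unit vector does not by itself keep $\partialx\nu(\xi,x)+\Ucone{x}{\Kmap(\xi)}$ away from $\gamma\Uball$. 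Be aware, however, that the paper's own proof has the same blind spot: it asserts that $x\in\R^n\backslash\Equi(\xi)$ and $z\in\overline{\Kmap(\xi)}$ force $f(\xi,x,z)\notin C$, which fails precisely in this case. So your treatment is, if anything, more candid than the paper's, but the deferred case remains an open seam in both arguments and should either be excluded by an additional hypothesis or handled by a genuine estimate rather than a remark.
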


\begin{proof}
Fix $\xi\in\ball{\bar\xi}{\rho}$ and consider the function $x\mapsto\mf(\xi,x)$.
By virtue of hypothesis (iii) each function $x\mapsto \dist{f(\xi,x,z)}{C}$
is continuous for every $z\in\Kmap(\xi)$, so the supremum over $\Kmap(\xi)$
is l.s.c..  As function $x\mapsto\dist{x}{\Kmap(\xi)}$ is (Lipschitz)
continuous, it is clear that $\mf(\xi,\cdot)$ is l.s.c. on $\R^n$.
Observe that, since $\Kmap(\xi)$ is compact by hypothesis (i), one has
$\dom\mf(\xi,\cdot)=\R^n=[\mf(\xi,\cdot)<+\infty]\ne\varnothing$.
Besides, by virtue of hypothesis (iii) and the convexity of $\Kmap(\xi)$,
$\mf(\xi,\cdot)$ turns out to be convex, so actually continuous on $\R^n$.
In such a circumstance, the estimate
\begin{equation}    \label{eq:stslestconvexcase}
  \stsl{\mf(\xi,\cdot)}(x)=\dist{\nullv}{\partialx\mf(\xi,x)}
\end{equation}
is known to hold (see, for instance, \cite[Theorem 5(i)]{FaHeKrOu10}).
Under the aforementioned continuity properties of $\nu(\xi,\cdot)$
and $\mu(\xi,\cdot)$, it is possible to write
$$
  \partialx\mf(\xi,x)=\partialx\nu(\xi,x)+\partialx\mu(\xi,x)
  \subseteq\partialx\nu(\xi,x)+\Ucone{x}{\Kmap(\xi)},
$$
where the unit truncation map defined as in $(\ref{eq:deftruncmap})$
is now constructed by means of
the normal cone in the sense of convex analysis. By proceeding
as in the proof of Lemma \ref{lem:subdnuapproxest} one finds
\begin{eqnarray}   \label{eq:parsubdnuclconvGderf}
    \partialx\nu(\xi,x)&=&\clco\bigcup_{z\in\overline{\Kmap(\xi)}}
    \partial\left(\dist{f(\xi,\cdot,z)}{C}\right)(x) \nonumber  \\
    &=&\clco\bigcup_{z\in\overline{\Kmap(\xi)}}
    \Gder{f(\xi,\cdot,z)}{x}^*
    \left(\partial\dist{\cdot}{C}(f(\xi,x,z))\right).
\end{eqnarray}
Now, if $x\in\R^n\backslash\Equi(\xi)$ and $z\in\overline{\Kmap(\xi)}$,
it means that $f(\xi,x,z)\not\in C$. Since function $y\mapsto\dist{y}{C}$
is strictly differentiable on $\R^m\backslash C$ (thanks to the Euclidean
space structure),  according to $(\ref{eq:bsubdoosdist})$ one finds
$$
  \partial\dist{\cdot}{C}(f(\xi,x,z))=\left\{\Gder{\dist{\cdot}{C}}{f(\xi,x,z)}\right\}=
  \left\{{f(\xi,x,z)-\Proj{f(\xi,x,z)}{C}\over\dist{f(\xi,x,z)}{C}}\right\}.
$$
By employing this subdifferential representation in the second
equality of $(\ref{eq:parsubdnuclconvGderf})$, one obtains
\begin{eqnarray*}
   \partialx\nu(\xi,x)=\clco\bigcup_{z\in\overline{\Kmap(\xi)}}
   \Gder{f(\xi,\cdot,z)}{x}^*\left({f(\xi,x,z)-\Proj{f(\xi,x,z)}{C}
   \over\dist{f(\xi,x,z)}{C}}\right) \\
   \forall x\in\R^n\backslash\Equi(\xi),\
   \forall\xi\in\ball{\bar\xi}{\rho} .
\end{eqnarray*}
In the light of the estimate $(\ref{eq:stslestconvexcase})$, the last
equality, along with hypothesis (iv), implies that
$$
  \stsl{\mf(\xi,\cdot)}(x)\ge\gamma,\quad\forall x\in\R^n
  \backslash\Equi(\xi).
$$
By invoking a well-known condition for the error bound of
convex functions (see, for instance, \cite[Proposition 3.1]{Uder22}),
the last inequality guarantees both the assertions in the
thesis.
\end{proof}

The preceding lemmata single out a setting where the stationary
condition in $(\ref{in:noptcond1})$ can be fully formulated in
terms of initial problem data.

\begin{theorem}[Necessary optimality condition under smoothness
and concavity]
Let $(\bar\xi,\bar x)\in\graph\Equi$, with $\bar\xi\in\Omega$, be a local
solution to $(\MPEC)$ and let $\rho>0$. Suppose that:

\begin{itemize}

\item[(i)] $\varphi$ is locally Lipschitz around $(\bar\xi,\bar x)$;

\item[(ii)] it holds  $\Ncone{\bar\xi}{\Omega}\cap
[-\Coder{\Equi}{\bar\xi}{\bar x}(\nullv)]=\{\nullv\}$;

\item[(iii)] $\Kmap:\R^p\rightrightarrows\R^n$ is concave
and takes convex values on $\ball{\bar\xi}{\rho}$;

\item[(iv)] $\Kmap$ is continuous and $\Kmap(\bar\xi)$ is bounded;

\item[(v)] $f:\R^p\times\R^n\times\R^n\longrightarrow\R^m$
is $C$-concave;

\item[(vi)] $f\in C^1(O)$, where $O$ is an open set such that
$\ball{\bar\xi}{\rho}\times\R^n\times\ball{\Kmap(\bar\xi)}{\rho}
\subseteq O$, with $\Gder{f(\cdot,\cdot,z)}{\bar\xi,\bar x}$
and $\Gder{f(\xi,\cdot,z)}{x}$ onto;

\item[(vii)] each function $z\mapsto f(\xi,x,z)$ is open with a uniform
linear rate $\alpha>0$ on the set $\ball{\Kmap(\bar\xi)}{\rho}$,
for every $(\xi,x)\in\ball{\bar\xi}{\rho}\times\ball{\Kmap(\bar\xi)}{\rho}$;

\item[(viii)] each function $(\xi,x)\mapsto f(\xi,x,z)$ is Lipschitz continuous
on $\ball{\bar\xi}{\rho}\times\ball{\Kmap(\bar\xi)}{\rho}$  with uniform constant
$\ell_f<\alpha$, for every $z\in\ball{\Kmap(\bar\xi)}{\rho}$;

\item[(ix)] there exists $\gamma>0$ such that
\begin{eqnarray*}
  \left[\clco\bigcup_{z\in\overline{\Kmap(\xi)}}\Gder{f(\xi,\cdot,z)}{x}^*
  \left({f(\xi,x,z)-\Proj{f(\xi,x,z)}{C}\over\dist{f(\xi,x,z)}{C}}\right)+
  \Ucone{x}{\Kmap(\xi)}\right]\cap
  \gamma\Uball=\varnothing, \\
  \forall x\in\R^n\backslash
  \Equi(\xi),\ \forall\xi\in\ball{\bar\xi}{\rho}.
\end{eqnarray*}
\end{itemize}
Then, there exists $\lambda>0$ such that
\begin{eqnarray*}
  \nullv\in\partial\varphi(\bar\xi,\bar x) &+&
  \lambda\left[\left(\Ncone{\bar\xi}{\Omega}\cap\Uball\right)
  \times\{\nullv\}\right]  \\ &+& {\lambda\over\gamma} \biggl\{
  \bigcap_{\epsilon\in (0,\rho/2)}
   \clco\biggl[\bigcup_{z\in\overline{\ball{\Kmap(\bar\xi)}{\epsilon}}}
   \Gder{f(\cdot,\cdot,z)}{\bar\xi,\bar x}(\dcone{C}\cap\Uball)
   +\ell_f\Uball\biggl]  \\
   &+&  [\Coder{\Kmap}{\bar\xi}{\bar x}(\Uball)\times\Uball]\biggl\}.
\end{eqnarray*}
\end{theorem}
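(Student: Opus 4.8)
The plan is to read this statement as the specialization of Theorem \ref{thm:nocond1} to the smooth, concave setting, in which the abstract error-bound requirement is supplied by Lemma \ref{lem:perbosmconc} and the term $\partial\nu(\bar\xi,\bar x)$ appearing in $(\ref{in:noptcond1})$ is rendered explicit by Lemma \ref{lem:subdnuapproxest}. Accordingly I would argue in two stages: first re-derive the intermediate stationarity condition $(\ref{in:noptcond1})$ under the present hypotheses, and then substitute into it the outer estimate $(\ref{in:subdnuestclcoderf})$ for $\partial\nu$.

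For the first stage, the decisive remark is that hypotheses (iii), (iv), (v), (vi) and (ix) are exactly the assumptions of Lemma \ref{lem:perbosmconc}: after shrinking $\rho$ so that the upper semicontinuity of $\Kmap$ together with the boundedness of $\Kmap(\bar\xi)$ forces $\Kmap(\xi)$ to be compact for every $\xi\in\ball{\bar\xi}{\rho}$ (recall that $(\SAss)$ already gives closed values), that lemma produces the uniform error bound $(\ref{in:unierboVEP})$ with constant $\gamma$. The remaining ingredients of Theorem \ref{thm:nocond1} are at hand: $\graph\Equi$ is closed by the corollary on the closure of $\graph\Equi$ (since $\Kmap$ is continuous and $f$, being of class $C^1$, is $C$-u.s.c.), and the subtransversality of $\widetilde{\Omega}$ and $\graph\Equi$ at $(\bar\xi,\bar x)$ follows from hypothesis (ii) through $(\ref{eq:subtransvnconecond})$, precisely as detailed in Remark \ref{rem:transvsufc}. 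Hence the proof of Theorem \ref{thm:nocond1} can be replayed verbatim from the error bound onward — invoking Proposition \ref{pro:constred}, the Fermat rule $(\ref{in:buncoptcond})$, the subdifferential sum rule, and Lemma \ref{lem:bsubdmi} — the sole change being that Lemma \ref{lem:perbosmconc} now plays the part that Lemma \ref{lem:erbo1} played there. This delivers $(\ref{in:noptcond1})$.

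Two points warrant care in this stage. First, since $\Kmap(\xi)$ is compact near $\bar\xi$ and $f$ is continuous, $\nu$ is finite on a neighbourhood of $(\bar\xi,\bar x)$ and, being convex by Lemma \ref{lem:convnu}, is therefore locally Lipschitz there; consequently $\partial^\infty\nu(\bar\xi,\bar x)=\{\nullv\}$, so the qualification needed to split $\partial\mf\subseteq\partial\nu+\partial\mu$ holds automatically and discharges the role of hypothesis (v) of Theorem \ref{thm:nocond1}. Second, because $\bar x\in\Equi(\bar\xi)\subseteq\Kmap(\bar\xi)$ with $\Kmap(\bar\xi)$ convex, one has $\Proj{\bar x}{\Kmap(\bar\xi)}=\{\bar x\}$, whence Lemma \ref{lem:bsubdmi} collapses, via $(\ref{in:bsubdifmi})$, to $\partial\mu(\bar\xi,\bar x)\subseteq\Coder{\Kmap}{\bar\xi}{\bar x}(\Uball)\times\Uball$, exactly the term occurring in $(\ref{in:noptcond1})$.

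For the second stage I would check that hypotheses (iii)--(viii), read with $\eta=\rho$, match the assumptions of Lemma \ref{lem:subdnuapproxest}: concavity and upper semicontinuity of $\Kmap$ with $\Kmap(\bar\xi)$ bounded, $C$-concavity and $C^1$-smoothness of $f$ with $\Gder{f(\cdot,\cdot,z)}{\bar\xi,\bar x}$ onto, and the uniform openness and Lipschitz estimates with $\ell_f<\alpha$ (the containment $\R^n\supseteq\ball{\Kmap(\bar\xi)}{\rho}$ ensures the required product neighbourhood lies in $O$). Lemma \ref{lem:subdnuapproxest} then yields the inclusion $(\ref{in:subdnuestclcoderf})$, and replacing $\partial\nu(\bar\xi,\bar x)$ in $(\ref{in:noptcond1})$ by the intersection-over-$\epsilon\in(0,\rho/2)$ on its right-hand side produces the asserted condition. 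I do not expect a single deep obstacle here; the real work is the bookkeeping — reconciling the neighbourhood radii of the three invoked results (the shrinkage of $\rho$ securing compact values and the identification $\eta=\rho$) and confirming that the automatic triviality of $\partial^\infty\nu$ legitimises every sum-rule splitting, so that the concave/smooth error bound of Lemma \ref{lem:perbosmconc} slots cleanly into the skeleton of Theorem \ref{thm:nocond1}.
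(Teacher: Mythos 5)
Your proposal is correct and follows essentially the same route as the paper's own proof: verify the hypotheses of Theorem \ref{thm:nocond1} (error bound via Lemma \ref{lem:perbosmconc} after shrinking the radius to secure compact values, subtransversality via hypothesis (ii) and Remark \ref{rem:transvsufc}, and $\partial^\infty\nu(\bar\xi,\bar x)=\{\nullv\}$ from local Lipschitz continuity of the convex function $\nu$), then substitute the outer estimate of $\partial\nu(\bar\xi,\bar x)$ from Lemma \ref{lem:subdnuapproxest} into $(\ref{in:noptcond1})$. No substantive differences.
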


\begin{proof}
The first part of the proof consists in showing that, under the
current apparatus of hypotheses, it is possible to apply Theorem
\ref{thm:nocond1}. In the second part, the condition in the thesis
is easily derived from $(\ref{in:noptcond1})$, by exploiting the
outer estimate of $\partial\nu(\bar\xi,\bar x)$ provided by Lemma
\ref{lem:subdnuapproxest}.

Observe that by the concavity of $\Kmap$ and the $C$-concavity
of $f$, function $\nu$ is convex, while by the continuity of $\Kmap$
and the smoothness of $f$ function $\nu$ is l.s.c.. By the upper semicontinuity
of $\Kmap$ at $\bar\xi$ there exists $\delta\in (0,\rho)$ such that
$$
  \Kmap(\xi)\subseteq\ball{\Kmap(\bar\xi)}{\rho},\quad\forall
  \xi\in\ball{\bar\xi}{\delta}.
$$
As $\Kmap(\bar\xi)$ is compact, the above inclusion implies that
$\Kmap$ takes compact values around $\bar\xi$.
Consequently, by continuity on $\ball{\Kmap(\bar\xi)}{\rho}$ of
each function $f(\xi,x,\cdot)$, with $(\xi,x)\in\ball{\bar\xi}{\rho}
\times\ball{\Kmap(\bar\xi)}{\rho}$, one has that each set
$f(\xi,x,\Kmap(\xi))$ is compact, for any $(\xi,x)\in\ball{\bar\xi}{\rho}
\times\ball{\bar x}{\rho}$. It follows that $(\bar\xi,\bar x)\in
\inte\dom\nu$. Thus, as a convex function, $\nu$ is locally Lipschitz
around $(\bar\xi,\bar x)$. This fact entails that
$
  \partial^\infty\nu(\bar\xi,\bar x)=\{\nullv\},
$
thereby showing that the qualification condition
$$
  -\partial^\infty\nu(\bar\xi,\bar x)\cap
  \left(\Coder{\Kmap}{\bar\xi}{\bar x}(\Uball)\times
  \Uball\right)=\{\nullv\}
$$
is satisfied as required in Theorem \ref{thm:nocond1}.
As commented in Remark \ref{rem:transvsufc}, hypothesis (ii)
ensures that $\widetilde{\Omega}$ and $\graph\Equi$ are subtransversal
at $(\bar\xi,\bar x)$.
Besides, the existence for every $\xi\in\ball{\bar\xi}{\rho}$ of $x_{0,\xi}
\in\Kmap(\xi)$ such that $f(\xi,x_{0,\xi},\Kmap(\xi))$  is $C$-bounded becomes a
consequence of the compactness of $f(\xi,x,\Kmap(\xi))$, for every
$x\in\ball{\bar x}{\delta}$.
All the other hypotheses of Lemma \ref{lem:perbosmconc} being fulfilled,
the validity of condition $(\ref{eq:erbosubdgammacond})$ is ensured
by virtue of hypothesis (ix).
Thus the local optimality of $(\bar\xi,\bar x)$ leads to the existence
of $\lambda>0$ such that the inclusion in $(\ref{in:noptcond1})$
holds.
It remains to notice that hypotheses (iii)-(viii) make it possible
to apply Lemma \ref{lem:subdnuapproxest} in such a way to
express $\partial\nu(\bar\xi,\bar x)$ as in formula $(\ref{in:subdnuestclcoderf})$.
This leads to the condition in the assertion.
\end{proof}

\vskip1cm


\end{document}